\documentclass[11pt,reqno]{amsart}

\usepackage[T1]{fontenc}
\usepackage[utf8]{inputenc}
\usepackage[expansion=false,protrusion=true]{microtype}
\usepackage{amsmath,amssymb,mathtools}
\usepackage{booktabs}
\usepackage{enumitem}
\usepackage{graphicx}
\usepackage[margin=2.6cm]{geometry}
\usepackage[colorlinks=true,linkcolor=black,citecolor=black,urlcolor=black]{hyperref}

\numberwithin{equation}{section}

\theoremstyle{plain}
\newtheorem{theorem}{Theorem}[section]
\newtheorem{proposition}[theorem]{Proposition}
\newtheorem{lemma}[theorem]{Lemma}
\newtheorem{corollary}[theorem]{Corollary}
\theoremstyle{definition}
\newtheorem{assumption}[theorem]{Assumption}
\theoremstyle{remark}
\newtheorem{remark}[theorem]{Remark}

\newcommand{\bT}{\mathbf{T}}
\newcommand{\bZ}{\mathbf{Z}}
\newcommand{\bU}{\mathbf{U}}
\newcommand{\bV}{\mathbf{V}}
\newcommand{\bW}{\mathbf{W}}
\newcommand{\bB}{\mathbf{B}}
\newcommand{\bDelta}{\boldsymbol{\Delta}}
\newcommand{\bone}{\mathbf{1}}
\newcommand{\ba}{\mathbf{a}}
\newcommand{\bze}{\mathbf{0}}
\newcommand{\GH}{\Gamma_H}
\newcommand{\Hmax}{H_{\max}}
\newcommand{\Hmin}{H_{\min}}
\newcommand{\R}{\mathbb{R}}
\newcommand{\N}{\mathbb{N}}
\newcommand{\Prob}{\mathbb{P}}
\newcommand{\E}{\mathbb{E}}
\newcommand{\Norm}{\mathcal{N}}
\newcommand{\Hi}{\mathfrak{H}}
\DeclareMathOperator{\Var}{Var}
\DeclareMathOperator{\Cov}{Cov}
\DeclareMathOperator{\Bias}{Bias}
\DeclareMathOperator{\tr}{tr}
\DeclareMathOperator{\rank}{rank}
\DeclareMathOperator{\MSE}{MSE}

\begin{document}

\title[Exact inference for multi-mixed fractional Brownian motion]
      {Exact finite-sample inference for multi-mixed\\
       fractional Brownian motion with drift}

\author{Afrah Al-Harby}
\address{Department of Mathematics, College of Science, Imam Abdulrahman Bin Faisal
University, P.\,O.\ Box 1982, Dammam, Saudi Arabia}
\email{2250500238@iau.edu.sa}

\author{Ezzedine Mliki}
\address{Department of Mathematics, College of Science, Imam Abdulrahman Bin Faisal
University, P.\,O.\ Box 1982, Dammam, Saudi Arabia}
\address{Basic and Applied Scientific Research Center, Imam Abdulrahman Bin Faisal
University, P.\,O.\ Box 1982, 31441, Dammam, Saudi Arabia}
\email{ermliki@iau.edu.sa}

\begin{abstract}
In this paper we study a linear drift perturbed by a superposition of $m$ independent
fractional Brownian motions with known Hurst parameters and a common scale, observed at
$N$ equidistant times. Inference for such models is usually asymptotic; we show that here
it is exact. We derive the maximum likelihood estimators of the drift $\theta$ and of the
scale $\alpha^{2}$ in closed form and obtain their exact finite-sample joint law:
$\widehat\theta$ is Gaussian, $N\widehat\alpha^{\,2}/\alpha^{2}$ is chi-square with $N-1$
degrees of freedom, and the two are independent. As this law is free of every model
parameter, we deduce Student and chi-square confidence intervals and tests of exact level
for every $N\ge2$, whatever the Hurst vector. We also prove that the estimators are
uniformly minimum variance unbiased with $\widehat\theta$ attaining the Cram\'er--Rao
bound at every $N$, that both are strongly consistent and asymptotically normal, and that
the drift estimators form, in law, a Brownian motion run along their own variance scale.
A sharp non-asymptotic bound shows that the accuracy of the drift is governed by the
length of the observation window and not by the mesh, and a Monte Carlo study confirms
exact coverage, even at small sample sizes, and quantifies what is lost when the Hurst
vector is misspecified.
\end{abstract}

\keywords{Multi-mixed fractional Brownian motion; maximum likelihood estimation;
discrete observations; exact finite-sample distribution; long-range dependence; strong
consistency; asymptotic normality}

\subjclass[2020]{Primary 62M09; Secondary 60G22, 60G15, 62F12, 62F25, 60H07}

\maketitle

\section{Introduction}

Fractional Brownian motion (fBm) is the centred continuous-time Gaussian process
$B^{H}=\{B^{H}(t),\,t\ge0\}$ introduced by Mandelbrot and Van Ness \cite{MVN68}, indexed
by a Hurst parameter $H\in(0,1)$ and characterised by $B^{H}(0)=0$ and the covariance
function
\begin{equation}\label{eq:fbmcov}
  \E\bigl[B^{H}(t)B^{H}(s)\bigr]=\tfrac12\bigl(t^{2H}+s^{2H}-|t-s|^{2H}\bigr),
  \qquad s,t\ge0 .
\end{equation}
The Hurst parameter governs both the dependence structure and the regularity of the
sample paths. The value $H=\frac12$ corresponds to standard Brownian motion,
$H>\frac12$ yields positively correlated increments and long-range dependence, while
$H<\frac12$ produces negatively correlated (antipersistent) increments. A key feature of
fBm is the stationarity of its increments,
\begin{equation}\label{eq:statincr}
  B^{H}(t+s)-B^{H}(t)\overset{d}{=}B^{H}(s),\qquad s,t\ge0,
\end{equation}
which plays a central role in the statistical analysis of discretely observed processes,
where inference is naturally formulated in terms of the increment process. Owing to its
ability to capture simultaneously self-similarity and long-range dependence, fBm has
become a standard model for complex temporal phenomena in hydrology, geosciences,
telecommunication networks, economics and finance. In mathematical finance, fractional
and mixed fractional models have been used in option pricing to incorporate long memory
and departures from the behaviour predicted by classical Brownian models; see, for
instance, Haddadi and Nasrollahi \cite{HN25}, who combine fractional and mixed fractional
models with Gram--Charlier expansions for European options under non-normal returns.

Despite its versatility, a single fractional component may be insufficient for phenomena
exhibiting several scaling regimes or several distinct sources of temporal dependence.
This limitation motivates multi-mixed fractional models obtained by superimposing
finitely many independent fractional Brownian motions with distinct Hurst parameters.
Such models offer greater flexibility and a richer covariance structure while remaining
Gaussian, hence amenable to likelihood-based inference. The terminology follows Maleki
Almani and Sottinen \cite{MS23}, who study the path properties of such superpositions.
The structural theory of mixtures of this kind is by now well developed. Alajmi and Mliki
\cite{AM21} introduce the mixed generalized fractional Brownian motion, which unifies the
mixed fractional Brownian motion of Cheridito and the generalized fractional Brownian
motion of Zili, and establish its self-similarity and its long-range dependence together
with conditions under which it fails to be a semimartingale. What such results do not
provide is inference: the sampling behaviour of the parameters of these processes, when
the process is observed at finitely many times, is a separate question and the one
addressed here.

Parameter estimation for fractional Gaussian models has received considerable attention.
For the drifted fractional Brownian motion $Y(t)=\mu t+\sigma B^{H}(t)$ observed at
discrete times, Hu, Nualart, Xiao and Zhang \cite{HNXZ11} developed an exact
likelihood-based framework, deriving closed-form maximum likelihood estimators for the
drift and variance parameters together with their asymptotic properties. This framework
has since been extended in several directions. Mishura, Ralchenko and Shklyar
\cite{MRS17} generalised the likelihood approach to Gaussian processes with stationary
increments, which contains fBm as a particular case. Xiao, Zhang and Zhang \cite{XZZ11}
studied maximum likelihood estimation for models driven by mixed fractional Brownian
motion, that is, by the sum of an independent standard Brownian motion and a fractional
Brownian motion. Mishura and Voronov \cite{MV15} considered maximum likelihood estimation
of the drift for a model driven by the sum of two independent fractional Brownian motions
with distinct Hurst parameters, reducing the problem to a Fredholm integral equation of
the second kind.

The closest current work allows the components to carry \emph{separate} scales.
Ralchenko and Yakovliev study $X_t=\theta t+\sigma W_t+\kappa B^{H}_t$ in \cite{RY23} and
$X_t=\kappa B^{H_1}_t+\sigma B^{H_2}_t$ in \cite{RY24}, constructing strongly consistent
estimators of all the parameters and proving joint asymptotic normality; the price of the
second free scale is that these limit theorems require $H<\tfrac12$ in the first model
and $H_1<H_2<\tfrac34$ in the second. For the sub-fractional model with drift, Kuang and
Liu \cite{KL15} obtain the central limit theorem together with Berry--Ess\'een bounds,
which are needed precisely because the finite-sample law is not available. Every result
in this line is asymptotic. The present paper takes the opposite trade: a single scale
shared by all components, and in exchange an exact finite-sample theory that holds at
every $N\ge2$ and imposes no restriction on the Hurst vector.

\subsection*{The model and our contribution}
Let $t_k=kh$, $k=1,\dots,N$, with mesh $h>0$, and consider the observation vector
\begin{equation}\label{eq:obsvec}
  \bZ=\bigl(Z(t_1),\dots,Z(t_N)\bigr)^{\top}=\theta\bT+\alpha\sum_{r=1}^{m}\bB^{H_r},
\end{equation}
generated by the continuous-time model
\begin{equation}\label{eq:model}
  Z(t)=\theta t+\alpha\sum_{r=1}^{m}B^{H_r}(t),\qquad t\ge0,
\end{equation}
where $B^{H_1},\dots,B^{H_m}$ are independent fractional Brownian motions with known
Hurst parameters $H_1,\dots,H_m\in(1/2,1)$, and
\[
  \bT=(t_1,\dots,t_N)^{\top},\qquad
  \bB^{H_r}=\bigl(B^{H_r}(t_1),\dots,B^{H_r}(t_N)\bigr)^{\top}.
\]
Under the independence assumption, $\bZ$ is a Gaussian vector with mean $\theta\bT$ and
covariance matrix $\alpha^{2}\GH$, where $\GH=\sum_{r=1}^{m}\Gamma_{H_r}$ and
\begin{equation}\label{eq:GammaHr}
  (\Gamma_{H_r})_{ij}=\tfrac12\bigl(t_i^{2H_r}+t_j^{2H_r}-|t_i-t_j|^{2H_r}\bigr),
  \qquad i,j=1,\dots,N .
\end{equation}
Model \eqref{eq:model} is the common-scale specialisation of the general multi-mixed
model $Z(t)=\theta t+\sum_{r=1}^{m}\alpha_rB^{H_r}(t)$. The restriction is natural
whenever the several sources of dependence are believed to act at a common intensity and
to differ only in memory, as when one mechanism is observed through channels of differing
persistence; it is also what the classical mixed models reduce to after
reparameterisation, since $\sigma W+\kappa B^{H}$ may be written as
$\sigma(W+\rho B^{H})$ with $\rho=\kappa/\sigma$ fixed. Analytically the common scale is
what makes the likelihood explicitly solvable, because the covariance matrix is then a
\emph{known} matrix multiplied by a single unknown constant; and the resulting inference
problem already exhibits the multi-scale covariance structure that distinguishes the
multi-mixed setting from the single-component one.

Our contribution is the following.
\begin{enumerate}[label=\textup{(\arabic*)}]
\item We derive closed-form maximum likelihood estimators $\widehat\theta$ and
      $\widehat\alpha^{\,2}$ (Theorem \ref{th:mle}), and we show that the whole
      multi-mixed structure enters the inference problem through a single scalar, the
      information quantity $d_N=\bT^{\top}\GH^{-1}\bT$, which admits an exact
      representation in terms of the increment process (Proposition \ref{pr:increment}).
\item We obtain the exact finite-sample joint law of
      $(\widehat\theta,\widehat\alpha^{\,2})$: the two estimators are independent,
      $\widehat\theta$ is Gaussian, and $N\widehat\alpha^{\,2}/\alpha^{2}$ is chi-square
      with $N-1$ degrees of freedom (Theorem \ref{th:exactlaw}). All moment formulas
      follow at once (Corollary \ref{co:moments}), and exact confidence intervals and
      exact tests for $\theta$ and $\alpha^{2}$ become available
      (Corollaries \ref{co:ci} and \ref{co:tests}).
\item We establish the optimality of the estimators in the finite-sample sense: the pair
      $(\widehat\theta,\widehat\alpha^{\,2})$ is complete and sufficient,
      $\widehat\theta$ and the bias-corrected estimator
      $\widetilde\alpha^{\,2}=N\widehat\alpha^{\,2}/(N-1)$ are uniformly minimum variance
      unbiased, and $\widehat\theta$ attains the Cram\'er--Rao bound at every sample size
      (Theorem \ref{th:umvu} and Proposition \ref{pr:fisher}).
\item We establish an explicit non-asymptotic variance bound for $\widehat\theta$
      (Proposition \ref{pr:varbound}) showing that the accuracy of drift estimation is
      driven by the length $t_N=Nh$ of the observation window and by
      $\Hmax=\max_r H_r$, and we show that the order of this bound is sharp
      (Remark \ref{re:sharp}).
\item We prove strong consistency of both estimators (Theorems \ref{th:sctheta} and
      \ref{th:scalpha}) and asymptotic normality (Theorem \ref{th:an}), the latter both
      by an elementary argument and by the Malliavin calculus criterion of Nualart and
      Ortiz-Latorre \cite{NOL08}.
\item We show that the sequence $\{\widehat\theta^{(N)}\}_{N\ge1}$ has independent
      increments and is, in law, a Brownian motion evaluated along the decreasing
      sequence of its own variances (Theorem \ref{th:indincr} and
      Corollary \ref{co:tcbm}); this yields a second, structural proof of strong
      consistency.
\item A Monte Carlo study quantifies the finite-sample behaviour of the estimators and
      confirms the exact distributional results, including the coverage of the exact
      confidence intervals (Section \ref{se:sim}). A final experiment measures the cost
      of a misspecified Hurst vector and shows that the interval for $\theta$ degrades
      slowly, and in a direction one can predict, whereas the interval for
      $\alpha^{2}$ does not (Section \ref{sse:misspec}).
\end{enumerate}

Throughout, the asymptotic regime is the increasing domain one: the mesh $h>0$ is held
fixed and $N\to\infty$, so that the observation window $t_N=Nh$ grows without bound.

\section{Model specification and likelihood framework}

\subsection{Notation and standing assumptions}
Let $(\Omega,\mathcal F,\Prob)$ be a complete probability space carrying independent
fractional Brownian motions $\{B^{H_r}(t),\,t\ge0\}$, $r=1,\dots,m$, and write
$H=(H_1,\dots,H_m)^{\top}$ for the vector of Hurst parameters.

\begin{assumption}\label{as:main}
The mesh $h>0$ and the number of components $m\in\N$ are fixed and known; the Hurst
vector $H$ is known with $H_r\in(1/2,1)$ for every $r$; the drift $\theta\in\R$ and the
scale $\alpha>0$ are unknown. The observation times are $t_k=kh$, $k=1,\dots,N$, and we
set $t_0=0$.
\end{assumption}

\begin{remark}\label{re:half}
The restriction $H_r>1/2$ reflects the long-memory motivation of the model and is not
needed for most of the analysis: Theorems \ref{th:mle}, \ref{th:exactlaw},
\ref{th:umvu} and \ref{th:indincr}, Propositions \ref{pr:increment} and \ref{pr:fisher}
and Corollaries \ref{co:ci} and \ref{co:tests} hold verbatim for arbitrary
$H_r\in(0,1)$, and the asymptotic statements of Section \ref{se:asymp} hold for arbitrary
$H_r\in(0,1)$ with $\Hmax=\max_rH_r$ throughout. This is what allows the degenerate cases
listed in Remark \ref{re:special} to be covered. Only Remark \ref{re:sharp}, which
identifies the exact order of the variance of $\widehat\theta$, uses $\Hmax>1/2$.
\end{remark}

Define the superposition
\begin{equation}\label{eq:U}
  U(t)=\sum_{r=1}^{m}B^{H_r}(t),\qquad t\ge0,
\end{equation}
which we call a multi-mixed fractional Brownian motion, and the associated observation
vectors
\[
  \bU=\bigl(U(t_1),\dots,U(t_N)\bigr)^{\top}=\sum_{r=1}^{m}\bB^{H_r},
  \qquad \bT=(t_1,\dots,t_N)^{\top}.
\]
The observation vector is $\bZ=\theta\bT+\alpha\bU$. By independence of the components,
\begin{equation}\label{eq:Zlaw}
  \bZ\sim\Norm\bigl(\theta\bT,\ \alpha^{2}\GH\bigr),\qquad
  \GH=\sum_{r=1}^{m}\Gamma_{H_r},
\end{equation}
with $\Gamma_{H_r}$ as in \eqref{eq:GammaHr}. We write $\GH$ for $\GH^{(N)}$ whenever the
dependence on $N$ is clear, and $\Gamma_{H,N}$ when it must be displayed.

\begin{lemma}\label{le:nondeg}
Under Assumption \ref{as:main}, $\GH$ is symmetric positive definite; in particular
$\GH^{-1}$ exists and $\bT^{\top}\GH^{-1}\bT>0$.
\end{lemma}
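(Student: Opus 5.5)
Symmetry is immediate from \eqref{eq:GammaHr}, since each $\Gamma_{H_r}$ is symmetric and so is their sum. Each $\Gamma_{H_r}$ is the covariance matrix of the Gaussian vector $\bB^{H_r}$, hence positive semidefinite. Positive definiteness of $\GH$ will therefore follow once we show that one summand, say $\Gamma_{H_1}$, is positive definite; indeed for $x\neq\bze$ we have $x^{\top}\GH x\ge x^{\top}\Gamma_{H_1}x$.

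To show $\Gamma_{H_1}$ is positive definite, pass to increments. Let $L$ be the lower-triangular difference matrix mapping $(B(t_1),\dots,B(t_N))$ to $(B(t_1)-B(t_0),\dots,B(t_N)-B(t_{N-1}))$, where $t_0=0$. It is invertible, with unit diagonal. Hence $\Gamma_{H_1}$ is positive definite if and only if $L\Gamma_{H_1}L^{\top}$ is. The latter is the covariance matrix of $N$ consecutive increments of fBm on the mesh $h$. By stationarity \eqref{eq:statincr} and self-similarity, it equals $h^{2H_1}$ times the Toeplitz matrix of fractional Gaussian noise, with entries $\rho(k)=\tfrac12(|k+1|^{2H}-2|k|^{2H}+|k-1|^{2H})$.

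The key step is that this Toeplitz matrix is nonsingular for every $N$. Suppose $x^{\top}L\Gamma L^{\top}x=0$ for some $x\neq\bze$. Then $\sum_k x_k\bigl(B(t_k)-B(t_{k-1})\bigr)=0$ almost surely. The cleanest way to exclude this is through the spectral density of fractional Gaussian noise. It is given by
\[
f(\lambda)=c_H(1-\cos\lambda)\sum_{j\in\mathbb Z}|\lambda+2\pi j|^{-2H-1},
\]
which is strictly positive almost everywhere on $(-\pi,\pi)$. Then
\[
x^{\top}R\,x=\int_{-\pi}^{\pi}\Bigl|\sum_k x_k e^{ik\lambda}\Bigr|^{2}f(\lambda)\,d\lambda .
\]
A nonzero trigonometric polynomial vanishes only at finitely many points, so this integral is strictly positive. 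Alternatively, one can use the harmonizable representation $B^{H}(t)=c\int_{\R}\frac{e^{it\xi}-1}{i\xi}|\xi|^{1/2-H}\,W(d\xi)$. This gives
\[
x^{\top}\Gamma_{H_1}x=c^2\int_{\R}\Bigl|\sum_k x_k(e^{it_k\xi}-1)\Bigr|^2|\xi|^{-1-2H}\,d\xi .
\]
The integrand is analytic in $\xi$ and not identically zero, because the $e^{it_k\xi}$ with distinct $t_k$ together with $1$ are linearly independent. So the integral is positive. I would use this second route, since it works for any $H\in(0,1)$ and avoids the difference matrix altogether; this is consistent with Remark \ref{re:half}.

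Finally, $\GH^{-1}$ exists and is positive definite. Since $\bT\neq\bze$ (because $t_1=h>0$), we get $\bT^{\top}\GH^{-1}\bT>0$. The only nontrivial step is the strict positivity of the quadratic form, that is, the linear independence of the exponentials. Everything else is bookkeeping.
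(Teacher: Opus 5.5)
Your proof is correct. Your first route is essentially the paper's, and the route you say you would actually use is genuinely different.

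The paper fixes each $r$ and writes $\bB^{H_r}$ as the cumulative sum of the fractional Gaussian noise increments on the grid. Stationarity of increments and the almost-everywhere positivity of the fGn spectral density then give nonsingular Toeplitz increment covariances. From this it concludes that every $\Gamma_{H_r}$, hence $\GH$, is positive definite. Your first alternative is exactly this argument, with the cited fact ``positive spectral density implies nonsingular Toeplitz matrices'' made explicit through the finitely many zeros of a nonzero trigonometric polynomial.

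Your preferred route works directly with the spectral representation of $B^{H}$ itself. Concretely, it uses the deterministic identity $\tfrac12(t^{2H}+s^{2H}-|t-s|^{2H})=c^{2}\int_{\R}(e^{it\xi}-1)\overline{(e^{is\xi}-1)}\,|\xi|^{-1-2H}\,d\xi$, so no stochastic integral is really needed. Positivity is then reduced to the linear independence of $1,e^{it_1\xi},\dots,e^{it_N\xi}$.

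What this buys you is independence from the grid geometry. It only needs the $t_k$ to be distinct and nonzero, not equidistant. It therefore covers the non-equidistant extension mentioned in Section \ref{se:disc}, where the paper's increment argument does not apply as stated: it relies on increments over equal steps forming a stationary sequence. The paper's route, in turn, sets up the Toeplitz matrix $C_N$ that it reuses in Proposition \ref{pr:increment} and Remark \ref{re:sharp}.

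Three small remarks on your write-up.
\begin{enumerate}
\item Both routes work for every $H\in(0,1)$, so validity for all $H$ is not what distinguishes yours; the gain is the grid.
\item ``The integrand is analytic'' should be stated precisely. With $g(\xi)=\sum_k x_k(e^{it_k\xi}-1)$ and real $x_k$, $|g(\xi)|^{2}$ on $\R$ is the restriction of the entire function $g(z)\overline{g(\bar z)}$. That function is not identically zero, so its real zeros are isolated, and the weight $|\xi|^{-1-2H}$ is positive off the origin.
\item Reducing to the single summand $\Gamma_{H_1}$ via $x^{\top}\GH x\ge x^{\top}\Gamma_{H_1}x$ is a harmless economy compared with the paper, which shows every summand is positive definite.
\end{enumerate}
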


\begin{proof}
Fix $r$ and set $H=H_r$. Let $\Delta^{B}_k=B^{H}(t_k)-B^{H}(t_{k-1})$, $k=1,\dots,N$, so
that $\bB^{H}=A\Delta^{B}$ with $\Delta^{B}=(\Delta^{B}_1,\dots,\Delta^{B}_N)^{\top}$ and
$A=(\mathbf 1_{\{j\le i\}})_{i,j=1}^{N}$ lower triangular with unit diagonal, hence
invertible. Because the grid is equidistant and $B^{H}$ has stationary increments
(see \eqref{eq:statincr}), $\Delta^{B}$ is a fractional Gaussian noise of step $h$, that
is, a stationary Gaussian sequence whose spectral density is strictly positive almost
everywhere on $[-\pi,\pi]$ \cite[Ch.~1]{Mis08}. A stationary sequence with an almost
everywhere positive spectral density has nonsingular covariance matrices at every order,
so $C^{B}_N:=\Cov(\Delta^{B})$ is positive definite, and therefore
$\Gamma_{H}=AC^{B}_NA^{\top}$ is positive definite as well. Summing over $r$,
$\GH=\sum_r\Gamma_{H_r}$ is positive definite. Finally $\bT\ne\bze$ because
$t_N=Nh>0$, whence $\bT^{\top}\GH^{-1}\bT>0$.
\end{proof}

\subsection{The increment process}

\begin{lemma}\label{le:statincr}
The process $U$ defined in \eqref{eq:U} is a centred Gaussian process with $U(0)=0$ and
stationary increments: for every $s\ge0$ the shifted increment process
$\{U(t+s)-U(s),\,t\ge0\}$ has the same law as $\{U(t),\,t\ge0\}$. Moreover, for all
$s,t\ge0$,
\begin{equation}\label{eq:Ucov}
  \E\bigl[U(t)U(s)\bigr]
  =\tfrac12\sum_{r=1}^{m}\bigl(t^{2H_r}+s^{2H_r}-|t-s|^{2H_r}\bigr),
\end{equation}
and the increment vector $\bDelta=(\Delta_1,\dots,\Delta_N)^{\top}$,
$\Delta_k=U(t_k)-U(t_{k-1})$, has the symmetric Toeplitz covariance matrix
$C_N=\bigl(c(|i-j|)\bigr)_{i,j=1}^{N}$ with
\begin{equation}\label{eq:CN}
  c(k)=\tfrac12\sum_{r=1}^{m}h^{2H_r}
       \bigl(|k+1|^{2H_r}+|k-1|^{2H_r}-2|k|^{2H_r}\bigr),\qquad k\ge0 .
\end{equation}
\end{lemma}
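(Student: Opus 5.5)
The plan is to reduce every claim to the corresponding property of a single fractional Brownian motion and then use the independence of the components. First, $U$ is a finite sum of independent centred Gaussian processes, so each finite-dimensional vector $(U(s_1),\dots,U(s_n))$ is a linear image of the jointly Gaussian vector obtained by stacking the $B^{H_r}(s_i)$; this vector is jointly Gaussian because the components are independent and each is Gaussian. Hence $U$ is centred Gaussian, and $U(0)=\sum_r B^{H_r}(0)=0$.

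Next I compute the covariance. By bilinearity, $\E[U(t)U(s)]=\sum_{r,q}\E[B^{H_r}(t)B^{H_q}(s)]$. For $r\ne q$ the cross terms vanish by independence and centring, and the diagonal terms are given by \eqref{eq:fbmcov}. This yields \eqref{eq:Ucov}.

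For the stationarity of the increments I will not rely on \eqref{eq:statincr}, which is stated only for one-dimensional marginals. Instead I compare covariance functions, since laws of centred Gaussian processes are determined by them. Set $V(t)=U(t+s)-U(s)$. Expanding $\E[V(t)V(u)]$ with \eqref{eq:Ucov} gives, for each $r$, the term
\[
\tfrac12\bigl((t+s)^{2H_r}+(u+s)^{2H_r}-|t-u|^{2H_r}\bigr)-\tfrac12\bigl((t+s)^{2H_r}+s^{2H_r}-t^{2H_r}\bigr)-\tfrac12\bigl((u+s)^{2H_r}+s^{2H_r}-u^{2H_r}\bigr)+s^{2H_r}.
\]
This simplifies to $\tfrac12(t^{2H_r}+u^{2H_r}-|t-u|^{2H_r})$. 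So $V$ is a centred Gaussian process with the same covariance as $U$, hence the same law. Verifying this cancellation is the only point needing care, and it is routine.

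Finally, the increment covariance. With $t_k=kh$, write $\E[\Delta_i\Delta_j]$ as the sum of four values of \eqref{eq:Ucov}. Inside each component, the terms $t^{2H_r}$ depending on a single time cancel, and only the $-\tfrac12|t_a-t_b|^{2H_r}$ parts survive. This gives
\[
\E[\Delta_i\Delta_j]=\tfrac12\sum_r\bigl(|t_i-t_{j-1}|^{2H_r}+|t_{i-1}-t_j|^{2H_r}-|t_i-t_j|^{2H_r}-|t_{i-1}-t_{j-1}|^{2H_r}\bigr).
\]
Substituting $t_a-t_b=(a-b)h$ and setting $k=i-j$ turns this into $\tfrac12\sum_r h^{2H_r}(|k+1|^{2H_r}+|k-1|^{2H_r}-2|k|^{2H_r})$. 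This depends only on $k$ and is even in $k$, so $C_N$ is symmetric Toeplitz with entries $c(|i-j|)$ as in \eqref{eq:CN}. I expect no real obstacle; the main care point is bookkeeping the cancellations in the two covariance expansions.
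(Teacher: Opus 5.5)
Your proof is correct and follows the paper's argument. Gaussianity and \eqref{eq:Ucov} come from independence, stationarity of the increments comes from the same covariance cancellation, and the only difference is the last step: you expand $\E[\Delta_i\Delta_j]$ directly into four values of \eqref{eq:Ucov}, whereas the paper first invokes stationarity and then substitutes, so your route to \eqref{eq:CN} is slightly more explicit and cleaner.
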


\begin{proof}
$U$ is a finite sum of independent centred Gaussian processes vanishing at the origin,
hence centred Gaussian with $U(0)=0$; formula \eqref{eq:Ucov} follows from
\eqref{eq:fbmcov} and independence.

Both $\{U(t+s)-U(s)\}_{t\ge0}$ and $\{U(t)\}_{t\ge0}$ are centred Gaussian, so it
suffices to compare their covariance functions. Fix $s\ge0$ and $0\le u\le v$. Using
\eqref{eq:Ucov},
\begin{align*}
  \E\bigl[(U(u+s)-U(s))(U(v+s)-U(s))\bigr]
  &=\E[U(u+s)U(v+s)]-\E[U(u+s)U(s)]\\
  &\qquad-\E[U(s)U(v+s)]+\E[U(s)^{2}]\\
  &=\tfrac12\sum_{r=1}^{m}\Bigl[(u+s)^{2H_r}+(v+s)^{2H_r}-|u-v|^{2H_r}\Bigr]\\
  &\quad-\tfrac12\sum_{r=1}^{m}\Bigl[(u+s)^{2H_r}+s^{2H_r}-u^{2H_r}\Bigr]\\
  &\quad-\tfrac12\sum_{r=1}^{m}\Bigl[s^{2H_r}+(v+s)^{2H_r}-v^{2H_r}\Bigr]
        +\sum_{r=1}^{m}s^{2H_r}\\
  &=\tfrac12\sum_{r=1}^{m}\bigl(u^{2H_r}+v^{2H_r}-|u-v|^{2H_r}\bigr)
   =\E\bigl[U(u)U(v)\bigr],
\end{align*}
all terms in $(u+s)^{2H_r}$, $(v+s)^{2H_r}$ and $s^{2H_r}$ cancelling pairwise. Since a
centred Gaussian process is determined by its covariance function, the two processes have
the same law.

Stationarity of the increments and the equidistance of the grid imply that
$\E[\Delta_i\Delta_j]$ depends on $i,j$ only through $|i-j|$, and taking $t=|i-j|h$,
$s=h$ in \eqref{eq:Ucov} gives
$\E[\Delta_i\Delta_j]=\tfrac12\sum_r\bigl((|i-j|+1)^{2H_r}h^{2H_r}
+(|i-j|-1)^{2H_r}h^{2H_r}-2|i-j|^{2H_r}h^{2H_r}\bigr)$, which is \eqref{eq:CN}.
\end{proof}

\begin{remark}\label{re:notss}
Unlike each of its components, $U$ is not self-similar when the $H_r$ are not all equal:
by \eqref{eq:Ucov}, $\Var U(t)=\sum_rt^{2H_r}$ is not a single power of $t$. It is
exactly this superposition of scaling exponents that gives the model its multi-scale
character, and it is why the aggregated matrix $\GH=\sum_r\Gamma_{H_r}$, rather than any
single $\Gamma_{H_r}$, must be inverted in the likelihood.
\end{remark}

\subsection{The likelihood function}

\begin{theorem}\label{th:loglik}
Under Assumption \ref{as:main}, the log-likelihood of $(\theta,\alpha^{2})$ based on
$\bZ$ is
\begin{equation}\label{eq:loglik}
  \ell(\theta,\alpha^{2};\bZ)=-\frac N2\log(2\pi)-\frac N2\log\alpha^{2}
  -\frac12\log|\GH|-\frac{1}{2\alpha^{2}}(\bZ-\theta\bT)^{\top}\GH^{-1}(\bZ-\theta\bT).
\end{equation}
\end{theorem}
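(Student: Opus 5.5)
The plan is to read the log-likelihood off the multivariate normal density, using \eqref{eq:Zlaw} and Lemma \ref{le:nondeg}. By \eqref{eq:Zlaw}, $\bZ$ is Gaussian with mean $\theta\bT$ and covariance $\Sigma:=\alpha^{2}\GH$. Lemma \ref{le:nondeg} shows that $\GH$ is symmetric positive definite, and since $\alpha>0$ the same holds for $\Sigma$. The law of $\bZ$ is therefore nondegenerate on $\R^{N}$ and has the Lebesgue density
\[
  f(\mathbf z)=(2\pi)^{-N/2}|\Sigma|^{-1/2}
  \exp\Bigl(-\tfrac12(\mathbf z-\theta\bT)^{\top}\Sigma^{-1}(\mathbf z-\theta\bT)\Bigr).
\]
The likelihood of $(\theta,\alpha^{2})$ is this density evaluated at $\mathbf z=\bZ$, viewed as a function of the parameters. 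This step needs positive definiteness; otherwise no density exists.

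Next I will write $\Sigma$ in terms of the known matrix $\GH$ and the single scalar $\alpha^{2}$. The determinant scales as $|\alpha^{2}\GH|=(\alpha^{2})^{N}|\GH|$, which gives
\[
  -\tfrac12\log|\Sigma|=-\tfrac N2\log\alpha^{2}-\tfrac12\log|\GH|.
\]
Here $|\GH|>0$ by Lemma \ref{le:nondeg}, so its logarithm is well defined. The inverse is $\Sigma^{-1}=\alpha^{-2}\GH^{-1}$, so the quadratic form becomes $\alpha^{-2}(\bZ-\theta\bT)^{\top}\GH^{-1}(\bZ-\theta\bT)$.

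Taking logarithms of $f(\bZ)$ and collecting the constant $-\frac N2\log(2\pi)$ gives exactly \eqref{eq:loglik}. None of the steps is a real obstacle; the only point needing care is the nondegeneracy of $\GH$, and Lemma \ref{le:nondeg} already supplies it. It is also worth noting that $\GH$ does not depend on $(\theta,\alpha^{2})$. The term $-\frac12\log|\GH|$ is therefore constant in the parameters, which is what makes the later maximisation explicit.
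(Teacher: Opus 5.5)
Your proposal is correct and follows the paper's proof essentially verbatim. Like the paper, you invoke Lemma \ref{le:nondeg} to get a nondegenerate Gaussian density for $\bZ$, use $|\alpha^{2}\GH|=(\alpha^{2})^{N}|\GH|$ and $(\alpha^{2}\GH)^{-1}=\alpha^{-2}\GH^{-1}$, and then take logarithms.
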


\begin{proof}
By \eqref{eq:Zlaw} and Lemma \ref{le:nondeg}, $\bZ$ admits the nondegenerate Gaussian
density
\[
  f(\bZ;\theta,\alpha^{2})=(2\pi\alpha^{2})^{-N/2}|\GH|^{-1/2}
  \exp\Bigl\{-\tfrac{1}{2\alpha^{2}}(\bZ-\theta\bT)^{\top}\GH^{-1}(\bZ-\theta\bT)\Bigr\},
\]
where we used $|\alpha^{2}\GH|=(\alpha^{2})^{N}|\GH|$. Taking logarithms gives
\eqref{eq:loglik}.
\end{proof}

\section{Maximum likelihood estimators and their exact distribution}\label{se:mle}

\subsection{The estimators}

\begin{theorem}\label{th:mle}
The log-likelihood \eqref{eq:loglik} admits the unique global maximiser
\begin{equation}\label{eq:mle}
  \widehat\theta=\frac{\bT^{\top}\GH^{-1}\bZ}{\bT^{\top}\GH^{-1}\bT},
  \qquad
  \widehat\alpha^{\,2}=\frac1N\left(\bZ^{\top}\GH^{-1}\bZ
    -\frac{(\bT^{\top}\GH^{-1}\bZ)^{2}}{\bT^{\top}\GH^{-1}\bT}\right),
\end{equation}
on $\R\times(0,\infty)$, almost surely.
\end{theorem}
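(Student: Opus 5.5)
The plan is a two-stage profile likelihood argument, using the positive definiteness of $\GH$ from Lemma \ref{le:nondeg} throughout. Write $Q(\theta)=(\bZ-\theta\bT)^{\top}\GH^{-1}(\bZ-\theta\bT)$ and $d_N=\bT^{\top}\GH^{-1}\bT>0$. Expanding,
\[
Q(\theta)=d_N\bigl(\theta-\widehat\theta\bigr)^{2}+\bZ^{\top}\GH^{-1}\bZ-\frac{(\bT^{\top}\GH^{-1}\bZ)^{2}}{d_N},
\]
with $\widehat\theta$ as in \eqref{eq:mle}. For every fixed $\alpha^{2}>0$, the log-likelihood \eqref{eq:loglik} depends on $\theta$ only through $-Q(\theta)/(2\alpha^{2})$. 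Hence $\theta=\widehat\theta$ is the unique maximiser in $\theta$, whatever the value of $\alpha^{2}$. Since $\ell(\theta,\alpha^{2})<\ell(\widehat\theta,\alpha^{2})$ whenever $\theta\ne\widehat\theta$, any global maximiser must have first coordinate $\widehat\theta$.

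Next I profile out $\theta$. Set $S=Q(\widehat\theta)$, which equals $N\widehat\alpha^{\,2}$ by the display above. The profile function is $g(s)=-\tfrac N2\log s-\tfrac{S}{2s}$ plus a constant, with $s=\alpha^{2}$. If $S>0$, then $g'(s)=\tfrac{1}{2s^{2}}(S-Ns)$ is positive on $(0,S/N)$ and negative on $(S/N,\infty)$. So $s=S/N=\widehat\alpha^{\,2}$ is the unique maximiser, and it lies in $(0,\infty)$. Together with the first step, this makes $(\widehat\theta,\widehat\alpha^{\,2})$ the unique global maximiser on $\R\times(0,\infty)$.

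The only delicate point, and where ``almost surely'' enters, is showing $S>0$ a.s. If $S=0$, then $\ell(\widehat\theta,s)\to+\infty$ as $s\downarrow0$, so no maximiser exists. Write $\GH^{-1/2}\bZ=\mathbf y$ and $\mathbf v=\GH^{-1/2}\bT$. Then $S=\|\mathbf y\|^{2}-(\mathbf v^{\top}\mathbf y)^{2}/\|\mathbf v\|^{2}$, which is the squared norm of the projection of $\mathbf y$ onto $\mathbf v^{\perp}$. By Cauchy--Schwarz $S\ge0$, with equality iff $\mathbf y$ is collinear with $\mathbf v$, that is, iff $\bZ\in\operatorname{span}(\bT)$. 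This requires $N\ge2$, which I take as implicit here (for $N=1$, $S\equiv0$ and no maximiser exists). Since $\bZ$ has the nondegenerate Gaussian density of Theorem \ref{th:loglik}, its law is absolutely continuous on $\R^{N}$. A one-dimensional subspace has Lebesgue measure zero when $N\ge2$, so $\Prob(S=0)=0$.
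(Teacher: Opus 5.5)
Your proof is correct and follows the paper's route. You minimise $Q(\theta)$ for each fixed $s=\alpha^{2}$, completing the square where the paper differentiates, then maximise the profile $-\tfrac N2\log s-Q(\widehat\theta)/(2s)$ by a sign analysis, with ``almost surely'' entering only through $Q(\widehat\theta)>0$, i.e.\ $\bZ\notin\operatorname{span}(\bT)$. Your remark that this last step needs $N\ge2$ sharpens the paper: for $N=1$ every $\bZ$ is collinear with $\bT$, so the paper's appeal to Lemma~\ref{le:nondeg} for this null event silently assumes $N\ge2$.
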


\begin{proof}
Write $s=\alpha^{2}>0$ and $Q(\theta)=(\bZ-\theta\bT)^{\top}\GH^{-1}(\bZ-\theta\bT)$.
For fixed $s$, $\ell$ is, up to an additive constant, equal to $-Q(\theta)/(2s)$. Since
$\GH^{-1}$ is positive definite, $Q$ is a strictly convex quadratic in $\theta$ with
\[
  Q'(\theta)=-2\,\bT^{\top}\GH^{-1}(\bZ-\theta\bT),\qquad
  Q''(\theta)=2\,\bT^{\top}\GH^{-1}\bT>0,
\]
so $Q$ has the unique minimiser $\widehat\theta$ given in \eqref{eq:mle}, for every
$s>0$. Substituting yields the profile log-likelihood
\[
  \ell_p(s)=-\frac N2\log(2\pi)-\frac N2\log s-\frac12\log|\GH|
            -\frac{Q(\widehat\theta)}{2s},
\]
whose derivative $\ell_p'(s)=-N/(2s)+Q(\widehat\theta)/(2s^{2})$ vanishes only at
$s=Q(\widehat\theta)/N$ and changes sign there from $+$ to $-$; hence this point is the
unique global maximiser. Note $Q(\widehat\theta)>0$ almost surely, because
$Q(\widehat\theta)=0$ would force $\bZ$ to be collinear with $\bT$, an event of
probability zero by Lemma \ref{le:nondeg}. Finally, expanding
\[
  Q(\widehat\theta)=\bZ^{\top}\GH^{-1}\bZ-2\widehat\theta\,\bT^{\top}\GH^{-1}\bZ
  +\widehat\theta^{\,2}\bT^{\top}\GH^{-1}\bT
  =\bZ^{\top}\GH^{-1}\bZ-\frac{(\bT^{\top}\GH^{-1}\bZ)^{2}}{\bT^{\top}\GH^{-1}\bT}
\]
gives the stated form of $\widehat\alpha^{\,2}=Q(\widehat\theta)/N$.
\end{proof}

\begin{remark}\label{re:gls}
$\widehat\theta$ is the generalised least squares estimator of the drift, which coincides
with the MLE under Gaussianity. Both estimators are computable from the data without
knowledge of $\alpha$, and $\widehat\theta$ does not depend on $\alpha$ at all.
\end{remark}

Throughout what follows we write
\begin{equation}\label{eq:dN}
  d_N:=\bT^{\top}\GH^{-1}\bT>0 ,
\end{equation}
the Fisher information about $\theta$ per unit of $\alpha^{2}$; see Proposition
\ref{pr:fisher}. The next result identifies $d_N$ in terms of the increment process and
shows that the multi-mixed structure enters the problem only through the Toeplitz matrix
$C_N$ of \eqref{eq:CN}. It will be the basis of the sharpness statement of
Remark \ref{re:sharp}, and it is also the form in which $\widehat\theta$ is best
computed.

\begin{proposition}\label{pr:increment}
Let $\bDelta^{Z}$ be the vector of observed increments, with $k$-th entry
$Z(t_k)-Z(t_{k-1})$, and let $C_N$ be the Toeplitz matrix \eqref{eq:CN}. Then
\begin{equation}\label{eq:dNincr}
  d_N=h^{2}\,\bone^{\top}C_N^{-1}\bone,
  \qquad
  \widehat\theta=\frac{\bone^{\top}C_N^{-1}\bDelta^{Z}}
                      {h\,\bone^{\top}C_N^{-1}\bone},
\end{equation}
where $\bone=(1,\dots,1)^{\top}\in\R^{N}$. Equivalently, $h\widehat\theta$ is the
generalised least squares estimator of the common mean $\E[\Delta^{Z}_k]=\theta h$ of the
stationary Gaussian sequence $\bDelta^{Z}$.
\end{proposition}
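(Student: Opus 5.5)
The plan is to transport everything through the differencing matrix used in the proof of Lemma \ref{le:nondeg}. Let $A=(\bone_{\{j\le i\}})_{i,j=1}^{N}$ be the lower-triangular summation matrix, and let $D=A^{-1}$ be the first-difference matrix, with $(D\mathbf x)_k=x_k-x_{k-1}$ and $x_0=0$. With $t_0=0$ and $U(0)=0$ we have $D\bZ=\bDelta^{Z}$, $D\bU=\bDelta$ and $D\bT=h\bone$, the last because $t_k-t_{k-1}=h$. Thus $\bU=A\bDelta$ and, by Lemma \ref{le:statincr}, $\GH=\Cov(\bU)=AC_NA^{\top}$. This identity could also be checked entrywise from \eqref{eq:Ucov} and \eqref{eq:CN}, but the probabilistic route is cleaner. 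Since $A$ is invertible and $\GH$ is positive definite by Lemma \ref{le:nondeg}, $C_N$ is positive definite, and
\[
\GH^{-1}=A^{-\top}C_N^{-1}A^{-1}=D^{\top}C_N^{-1}D .
\]

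Substituting this into the quadratic forms gives the two formulas in \eqref{eq:dNincr}:
\[
d_N=\bT^{\top}D^{\top}C_N^{-1}D\bT=(h\bone)^{\top}C_N^{-1}(h\bone)=h^{2}\bone^{\top}C_N^{-1}\bone,
\qquad
\bT^{\top}\GH^{-1}\bZ=h\,\bone^{\top}C_N^{-1}\bDelta^{Z}.
\]
Dividing the second by the first, as in \eqref{eq:mle}, gives the stated expression for $\widehat\theta$.

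For the equivalent GLS description, observe that $\bDelta^{Z}=D\bZ=\theta h\bone+\alpha\bDelta$. So $\bDelta^{Z}$ is a Gaussian sequence with constant mean $\theta h$ and covariance $\alpha^{2}C_N$. It is stationary because $C_N$ is Toeplitz (Lemma \ref{le:statincr}). The GLS estimator of the common mean $\mu=\theta h$ is
\[
\widehat\mu=\frac{\bone^{\top}C_N^{-1}\bDelta^{Z}}{\bone^{\top}C_N^{-1}\bone},
\]
and this is exactly $h\widehat\theta$.

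The only step needing care is the factorisation $\GH=AC_NA^{\top}$, specifically that $C_N$ is the covariance of the increment vector of the superposition $U$ rather than of a single component. This holds because $U$ has stationary increments (Lemma \ref{le:statincr}), so the increments of the sum have Toeplitz covariance equal to the sum of the componentwise ones. Everything else is linear algebra.
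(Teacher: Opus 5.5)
Your proposal is correct and follows the paper's proof essentially step for step: you obtain the factorisation $\GH=AC_NA^{\top}$ from $\bU=A\bDelta$ and Lemma \ref{le:statincr}, then substitute $A^{-1}\bT=h\bone$ and $A^{-1}\bZ=\bDelta^{Z}$ into $d_N$ and into the numerator of $\widehat\theta$. Your two explicit checks are small but welcome additions to what the paper writes out: that $Z(0)=0$ is what makes $D\bZ=\bDelta^{Z}$, and that $C_N$ inherits positive definiteness from $\GH$.
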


\begin{proof}
Let $A=(\mathbf 1_{\{j\le i\}})_{i,j=1}^{N}$, which is lower triangular with unit
diagonal and therefore invertible, with $A^{-1}$ the first-difference operator. Then
$\bZ=A\bDelta^{Z}$ and, because $t_k=kh$, $\bT=A(h\bone)$. Since
$\bDelta^{Z}-\theta h\bone=\alpha\bDelta$, Lemma \ref{le:statincr} gives
$\Cov(\bDelta^{Z})=\alpha^{2}C_N$, and therefore
$\alpha^{2}\GH=\Cov(\bZ)=A\Cov(\bDelta^{Z})A^{\top}=\alpha^{2}AC_NA^{\top}$, that is
$\GH=AC_NA^{\top}$ and $\GH^{-1}=A^{-\top}C_N^{-1}A^{-1}$. Substituting $A^{-1}\bT=h\bone$ and
$A^{-1}\bZ=\bDelta^{Z}$ in \eqref{eq:dN} and \eqref{eq:mle} gives
\[
  d_N=(A^{-1}\bT)^{\top}C_N^{-1}(A^{-1}\bT)=h^{2}\bone^{\top}C_N^{-1}\bone,
  \qquad
  \bT^{\top}\GH^{-1}\bZ=h\,\bone^{\top}C_N^{-1}\bDelta^{Z},
\]
which is \eqref{eq:dNincr}. The last assertion is the definition of the generalised least
squares estimator of the mean of a stationary sequence with covariance matrix
proportional to $C_N$.
\end{proof}

\subsection{The canonical reduction}
The whole distribution theory of this section rests on the following elementary change of
variables, which diagonalises the problem.

\begin{lemma}\label{le:canon}
Set
\begin{equation}\label{eq:canon}
  \bV=\GH^{-1/2}\bU,\qquad
  \ba=\frac{\GH^{-1/2}\bT}{\sqrt{d_N}},\qquad
  P=I_N-\ba\ba^{\top}.
\end{equation}
Then $\bV\sim\Norm(\bze,I_N)$, $\|\ba\|=1$, $P$ is the orthogonal projection onto
$\ba^{\perp}$ with $\rank P=N-1$, and
\begin{equation}\label{eq:canonid}
  \widehat\theta-\theta=\frac{\alpha\,\ba^{\top}\bV}{\sqrt{d_N}},
  \qquad
  \frac{N\widehat\alpha^{\,2}}{\alpha^{2}}=\bV^{\top}P\bV .
\end{equation}
\end{lemma}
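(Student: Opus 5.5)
Everything rests on Lemma \ref{le:nondeg}: $\GH$ is symmetric positive definite, so it has a unique symmetric positive definite square root $\GH^{1/2}$, with inverse $\GH^{-1/2}$, and $\GH^{-1/2}\GH\GH^{-1/2}=I_N$. The plan is to establish the distributional and geometric facts first, then rewrite both estimators in the whitened coordinates.

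First, $\bU\sim\Norm(\bze,\GH)$ by \eqref{eq:Zlaw} with $\theta=0$, $\alpha=1$. Hence $\bV=\GH^{-1/2}\bU$ is centred Gaussian with covariance $\GH^{-1/2}\GH\GH^{-1/2}=I_N$. Next,
\[
\|\ba\|^2=\bT^{\top}\GH^{-1}\bT/d_N=1
\]
by \eqref{eq:dN}, and $d_N>0$ by Lemma \ref{le:nondeg}. Consequently $P=I_N-\ba\ba^{\top}$ is symmetric and idempotent ($P^2=I-2\ba\ba^{\top}+\ba(\ba^{\top}\ba)\ba^{\top}=P$). It annihilates $\ba$ and fixes every vector orthogonal to $\ba$, so it is the orthogonal projection onto $\ba^{\perp}$, and $\rank P=\tr P=N-1$.

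For the identities \eqref{eq:canonid}, substitute $\bZ=\theta\bT+\alpha\bU$ into \eqref{eq:mle}. For the drift,
\[
\widehat\theta-\theta=\alpha\,\bT^{\top}\GH^{-1}\bU/d_N .
\]
Writing $\GH^{-1}=\GH^{-1/2}\GH^{-1/2}$ (using the symmetry of $\GH^{-1/2}$) gives $\bT^{\top}\GH^{-1}\bU=\sqrt{d_N}\,\ba^{\top}\bV$, which yields the first identity. For the scale, I will use the form $N\widehat\alpha^{\,2}=Q(\widehat\theta)$ from the proof of Theorem \ref{th:mle}. Set $\bW=\GH^{-1/2}\bZ=\theta\sqrt{d_N}\,\ba+\alpha\bV$. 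Then
\[
Q(\widehat\theta)=\|\bW\|^2-(\ba^{\top}\bW)^2=\bW^{\top}P\bW .
\]
Since $P\ba=\bze$, the drift component drops out, so $\bW^{\top}P\bW=\alpha^2\bV^{\top}P\bV$; dividing by $\alpha^2$ gives the second identity.

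No step is a real obstacle; the argument is routine linear algebra. The only point needing care is to fix the symmetric square root explicitly, so that $\GH^{-1}=(\GH^{-1/2})^{\top}\GH^{-1/2}$ holds and the whitened quadratic forms match those in \eqref{eq:mle}. The other point to keep in view is that $P\ba=\bze$ is exactly what removes the unknown $\theta$ from the residual quadratic form.
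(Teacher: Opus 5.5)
Your proof is correct and follows essentially the same route as the paper. You whiten by the symmetric square root $\GH^{-1/2}$, read off $\bV\sim\Norm(\bze,I_N)$ and the projection properties of $P$, and obtain the drift identity from $\bT^{\top}\GH^{-1}\bU=\sqrt{d_N}\,\ba^{\top}\bV$. The only cosmetic difference is in the scale identity: the paper shows directly that the whitened residual $\GH^{-1/2}(\bZ-\widehat\theta\bT)$ equals $\alpha P\bV$, while you whiten $\bZ$ itself, write $Q(\widehat\theta)=\bW^{\top}P\bW$, and let $P\ba=\bze$ (with $P$ symmetric) remove the drift term, which is the same mechanism.
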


\begin{proof}
By Lemma \ref{le:nondeg}, $\GH$ has a symmetric positive definite square root, and
$\bU\sim\Norm(\bze,\GH)$ gives $\bV=\GH^{-1/2}\bU\sim\Norm(\bze,I_N)$. Clearly
$\|\ba\|^{2}=\bT^{\top}\GH^{-1}\bT/d_N=1$, so $P=I_N-\ba\ba^{\top}$ satisfies
$P=P^{\top}=P^{2}$, $P\ba=\bze$ and $\tr P=N-1$.

Since $\bZ-\theta\bT=\alpha\bU$ and
$\bT^{\top}\GH^{-1}\bU=\bT^{\top}\GH^{-1/2}\bV=\sqrt{d_N}\,\ba^{\top}\bV$, we get from
\eqref{eq:mle}
\[
  \widehat\theta-\theta=\frac{\bT^{\top}\GH^{-1}(\bZ-\theta\bT)}{d_N}
  =\frac{\alpha\sqrt{d_N}\,\ba^{\top}\bV}{d_N}
  =\frac{\alpha\,\ba^{\top}\bV}{\sqrt{d_N}} .
\]
For the second identity, note that
$N\widehat\alpha^{\,2}=(\bZ-\widehat\theta\bT)^{\top}\GH^{-1}(\bZ-\widehat\theta\bT)
=\bigl\|\GH^{-1/2}(\bZ-\widehat\theta\bT)\bigr\|^{2}$ and
\[
  \GH^{-1/2}(\bZ-\widehat\theta\bT)=\alpha\bV-(\widehat\theta-\theta)\GH^{-1/2}\bT
  =\alpha\bV-\alpha(\ba^{\top}\bV)\ba=\alpha P\bV .
\]
Hence $N\widehat\alpha^{\,2}=\alpha^{2}\|P\bV\|^{2}=\alpha^{2}\bV^{\top}P\bV$.
\end{proof}

\begin{remark}\label{re:glm}
Lemma \ref{le:canon} says that the whitening map $z\mapsto\GH^{-1/2}z$ turns the
observation vector into
$\bV_{\!Z}:=\GH^{-1/2}\bZ\sim\Norm\bigl(\theta\sqrt{d_N}\,\ba,\ \alpha^{2}I_N\bigr)$
with $\ba$ a \emph{known} unit vector. After an orthogonal change of coordinates carrying
$\ba$ to the first axis, the model becomes the classical Gaussian linear model with a
single known regressor: one coordinate distributed as
$\Norm(\theta\sqrt{d_N},\alpha^{2})$ and $N-1$ coordinates i.i.d.\ $\Norm(0,\alpha^{2})$,
all independent. The exact theory of this section is therefore the exact small-sample
theory of that model transported through $\GH^{-1/2}$, and the entire multi-mixed
dependence structure (the number $m$ of components, the Hurst vector $H$ and the mesh
$h$) is compressed into the single scalar $d_N$. This is what makes exact inference
possible for arbitrary $m$ and arbitrary $H$, and it is also the reason why the results
of this section cannot be expected to survive the relaxation of the common-scale
assumption discussed in Section \ref{se:disc}: with component-specific scales
$\alpha_1,\dots,\alpha_m$ the covariance matrix
$\sum_r\alpha_r^{2}\Gamma_{H_r}$ is no longer a scalar multiple of a known matrix, and no
single whitening transformation removes the nuisance structure.
\end{remark}

\subsection{Exact finite-sample distribution}

\begin{theorem}\label{th:exactlaw}
Under Assumption \ref{as:main}, for every $N\ge2$:
\begin{enumerate}[label=\textup{(\roman*)}]
\item $\widehat\theta\sim\Norm\bigl(\theta,\ \alpha^{2}/d_N\bigr)$; in particular
      $\widehat\theta$ is unbiased;
\item $\dfrac{N\widehat\alpha^{\,2}}{\alpha^{2}}\sim\chi^{2}_{N-1}$;
\item $\widehat\theta$ and $\widehat\alpha^{\,2}$ are independent.
\end{enumerate}
\end{theorem}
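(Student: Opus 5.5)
The plan is to derive all three assertions directly from the canonical reduction of Lemma \ref{le:canon}. That lemma writes both estimators as functions of a single standard Gaussian vector $\bV\sim\Norm(\bze,I_N)$:
\[
\widehat\theta-\theta=\alpha\,\ba^{\top}\bV/\sqrt{d_N},
\qquad
N\widehat\alpha^{\,2}/\alpha^{2}=\bV^{\top}P\bV,
\]
where $\ba$ is a deterministic unit vector and $P=I_N-\ba\ba^{\top}$. So the theorem reduces to standard facts about linear and quadratic forms of a standard normal vector.

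I would first complete $\ba$ to an orthonormal basis $\ba=\mathbf{e}_1',\mathbf{e}_2',\dots,\mathbf{e}_N'$ of $\R^N$, and let $O$ be the orthogonal matrix with these rows. Set $\bW=O\bV$. By rotation invariance of the standard Gaussian law, $\bW\sim\Norm(\bze,I_N)$, so $W_1,\dots,W_N$ are i.i.d.\ $\Norm(0,1)$.

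In these coordinates $W_1=\ba^{\top}\bV$. Moreover $P$ is the orthogonal projection onto $\ba^{\perp}=\mathrm{span}(\mathbf{e}_2',\dots,\mathbf{e}_N')$, so
\[
\bV^{\top}P\bV=\|P\bV\|^{2}=\sum_{k=2}^{N}W_k^{2}.
\]
The three claims then follow in turn. For (i), $\widehat\theta=\theta+\alpha W_1/\sqrt{d_N}$ is an affine function of a standard normal, hence $\Norm(\theta,\alpha^{2}/d_N)$, and in particular unbiased. For (ii), $N\widehat\alpha^{\,2}/\alpha^{2}$ is a sum of $N-1$ squares of i.i.d.\ standard normals, hence $\chi^{2}_{N-1}$; this requires $N\ge2$ so that the sum is nonempty. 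For (iii), $\widehat\theta$ is a measurable function of $W_1$ alone and $\widehat\alpha^{\,2}$ is a measurable function of $(W_2,\dots,W_N)$ alone. These are independent blocks of an i.i.d.\ vector, so the two estimators are independent.

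As a cross-check for (iii) that avoids the basis, one can note that $\ba^{\top}\bV$ and $P\bV$ are jointly Gaussian with $\Cov(\ba^{\top}\bV,P\bV)=\ba^{\top}P=\bze^{\top}$. They are therefore independent, and $\widehat\alpha^{\,2}=\alpha^{2}\|P\bV\|^{2}/N$ is a function of $P\bV$.

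There is no real obstacle, since Lemma \ref{le:canon} has already done the whitening. The only points that need care are these:
\begin{itemize}
\item $\ba$, and hence $O$, are deterministic. They depend only on $\GH$ and $\bT$, not on the data, so the rotation argument is legitimate.
\item The existence of $\GH^{-1/2}$ and the positivity of $d_N$ come from Lemma \ref{le:nondeg}.
\item All laws obtained are free of $m$, $H$ and $h$ except through $d_N$ in (i).
\end{itemize}
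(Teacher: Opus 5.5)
Your proof is correct and follows the paper's route: both reduce everything to Lemma \ref{le:canon}, get (ii) by rotating $\ba$ onto a coordinate axis and using the rotational invariance of $\Norm(\bze,I_N)$, and your ``cross-check'' for (iii) via $\Cov(\ba^{\top}\bV,P\bV)=\bze^{\top}$ is exactly the paper's argument. The only cosmetic difference is that your main argument for (iii) reads independence off the rotated coordinates $W_1$ and $(W_2,\dots,W_N)$, which is equally valid.
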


\begin{proof}
We use Lemma \ref{le:canon}. Since $\bV\sim\Norm(\bze,I_N)$ and $\|\ba\|=1$, the scalar
$\ba^{\top}\bV$ is standard normal, and the first identity in \eqref{eq:canonid} gives
(i).

For (ii), $P$ is an orthogonal projection of rank $N-1$; writing
$P=O^{\top}\operatorname{diag}(I_{N-1},0)O$ for an orthogonal $O$ and using the
rotational invariance of $\Norm(\bze,I_N)$, we obtain
$\bV^{\top}P\bV=\sum_{i=1}^{N-1}\widetilde V_i^{2}\sim\chi^{2}_{N-1}$, where
$\widetilde{\bV}=O\bV\sim\Norm(\bze,I_N)$. The second identity in \eqref{eq:canonid}
gives (ii).

For (iii), the vector $(\ba^{\top}\bV,(P\bV)^{\top})^{\top}$ is Gaussian and
\[
  \Cov\bigl(P\bV,\ \ba^{\top}\bV\bigr)=P\,\E[\bV\bV^{\top}]\,\ba=P\ba=\bze,
\]
so $\ba^{\top}\bV$ and $P\bV$ are independent. As $\widehat\theta$ is a function of
$\ba^{\top}\bV$ and $\widehat\alpha^{\,2}=\alpha^{2}\|P\bV\|^{2}/N$ is a function of
$P\bV$, the two estimators are independent.
\end{proof}

\begin{remark}\label{re:pivot}
Statement (ii) is remarkable in that the law of $\widehat\alpha^{\,2}/\alpha^{2}$ depends
on the sample size only: it is free of the drift $\theta$, of the mesh $h$, of the number
of components $m$ and of the Hurst vector $H$. Thus the whole multi-scale dependence
structure of the model is absorbed by the whitening transformation $\bV=\GH^{-1/2}\bU$
and leaves no trace in the sampling distribution of the scale estimator. This is
confirmed numerically in Section \ref{se:sim}, where the standard deviation of
$\widehat\alpha$ is seen to be the same for $h=1/252$ and $h=1/12$. By contrast, the law
of $\widehat\theta$ does depend on $h$ and $H$, through the single scalar $d_N$.
\end{remark}

\begin{corollary}\label{co:moments}
For every $N\ge2$,
\begin{equation}\label{eq:moments}
  \E[\widehat\alpha^{\,2}]=\frac{N-1}{N}\alpha^{2},\qquad
  \Bias(\widehat\alpha^{\,2})=-\frac{\alpha^{2}}{N},\qquad
  \Var(\widehat\alpha^{\,2})=\frac{2(N-1)}{N^{2}}\alpha^{4},
\end{equation}
and consequently
\begin{equation}\label{eq:mse}
  \MSE(\widehat\alpha^{\,2})
  =\E\bigl[(\widehat\alpha^{\,2}-\alpha^{2})^{2}\bigr]=\frac{2N-1}{N^{2}}\alpha^{4}.
\end{equation}
Moreover $\E[\widehat\alpha]=b_N\alpha$ with
\begin{equation}\label{eq:bN}
  b_N=\sqrt{\frac2N}\,\frac{\Gamma(N/2)}{\Gamma((N-1)/2)}\in(0,1),\qquad b_N\uparrow1 .
\end{equation}
In particular $\widehat\alpha^{\,2}$ is biased but asymptotically unbiased, and
$\widetilde\alpha^{\,2}:=\frac{N}{N-1}\widehat\alpha^{\,2}$ is unbiased with
$\Var(\widetilde\alpha^{\,2})=2\alpha^{4}/(N-1)$.
\end{corollary}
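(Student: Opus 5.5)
The plan is to read everything off Theorem \ref{th:exactlaw}(ii). Write $Y:=N\widehat\alpha^{\,2}/\alpha^{2}\sim\chi^{2}_{N-1}$ and use the standard chi-square moments $\E[Y]=N-1$ and $\Var(Y)=2(N-1)$. Since $\widehat\alpha^{\,2}=\alpha^{2}Y/N$, linearity immediately gives $\E[\widehat\alpha^{\,2}]=(N-1)\alpha^{2}/N$ and hence $\Bias(\widehat\alpha^{\,2})=-\alpha^{2}/N$. Scaling gives $\Var(\widehat\alpha^{\,2})=\alpha^{4}\cdot2(N-1)/N^{2}$. The mean squared error then follows from the decomposition $\MSE=\Var+\Bias^{2}$:
\[
  \frac{2(N-1)+1}{N^{2}}\,\alpha^{4}=\frac{2N-1}{N^{2}}\,\alpha^{4}.
\]
For the bias-corrected estimator $\widetilde\alpha^{\,2}=\alpha^{2}Y/(N-1)$, the same two moments give mean $\alpha^{2}$ and variance $2\alpha^{4}/(N-1)$.

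For $\E[\widehat\alpha]$, I would write $\widehat\alpha=\alpha\sqrt{Y/N}$ and compute $\E[\sqrt Y]$ directly from the $\chi^{2}_{k}$ density with $k=N-1$. The substitution $y=2u$ reduces the integral to a Gamma integral:
\[
  \E[Y^{1/2}]=\frac{1}{2^{k/2}\Gamma(k/2)}\int_0^\infty y^{(k+1)/2-1}e^{-y/2}\,dy
  =\sqrt2\,\frac{\Gamma((k+1)/2)}{\Gamma(k/2)}.
\]
With $k=N-1$ this yields $b_N=\sqrt{2/N}\,\Gamma(N/2)/\Gamma((N-1)/2)$.

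The remaining work, and the only nontrivial step, is the qualitative claim $b_N\in(0,1)$ with $b_N\uparrow1$. Positivity is immediate. For $b_N<1$, Jensen's inequality (strict, since $Y$ is nondegenerate) gives $\E[\sqrt Y]<\sqrt{\E Y}=\sqrt{N-1}$, so $b_N<\sqrt{(N-1)/N}<1$.

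For the limit, I would use Wendel's or Gautschi's bounds on $\Gamma(x+\tfrac12)/\Gamma(x)$. For instance, Wendel's inequality $\Gamma(x+\tfrac12)/\Gamma(x)\ge x/\sqrt{x+1/2}$, together with the upper bound $\le\sqrt{x}$, applied at $x=(N-1)/2$ gives
\[
  \frac{N-1}{N}\le b_N\le\sqrt{\frac{N-1}{N}},
\]
and hence $b_N\to1$.

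Monotonicity is the main obstacle. I would attack it through the ratio
\[
  \frac{b_{N+1}^{2}}{b_N^{2}}=\frac{N}{N+1}\left(\frac{\Gamma(\tfrac{N+1}{2})^{2}}{\Gamma(\tfrac N2)\Gamma(\tfrac N2+1)}\right)\left(\frac{\Gamma(\tfrac{N+1}{2})\Gamma(\tfrac{N-1}{2}+1)}{\Gamma(\tfrac{N-1}{2})^{2}}\right)\cdot\frac{\Gamma(\tfrac{N-1}{2})\Gamma(\tfrac N2)}{\Gamma(\tfrac{N+1}{2})\Gamma(\tfrac{N-1}{2}+1)}\cdots,
\]
simplified via $\Gamma(x+1)=x\Gamma(x)$ to the statement that $b_{N+1}b_N=\sqrt{(N-1)/\sqrt{N(N+1)}}$-type identities. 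An alternative route is log-convexity of $\Gamma$ (Bohr–Mollerup), which gives $\Gamma(x+\tfrac12)^{2}\le\Gamma(x)\Gamma(x+1)$. If that becomes messy, I would instead show that $x\mapsto\Gamma(x+\tfrac12)/(\sqrt{x+\tfrac12}\,\Gamma(x))$ is increasing, by differentiating its logarithm and using the series $\psi'(x)=\sum_{n\ge0}(x+n)^{-2}$ to sign the derivative.
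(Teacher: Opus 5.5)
Your treatment of the moment identities, the mean squared error, the properties of $\widetilde\alpha^{\,2}$, the Gamma-function formula for $b_N$ and the bound $b_N\le\sqrt{(N-1)/N}<1$ is exactly the paper's argument. Your Wendel sandwich $\frac{N-1}{N}\le b_N\le\sqrt{\frac{N-1}{N}}$ also supplies the limit $b_N\to1$, which the paper takes for granted.

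\textbf{The gap is the monotonicity in $b_N\uparrow1$.} You list strategies for it but carry none out, and the first two would not close on their own. The displayed ratio is garbled. The identity you need is $b_Nb_{N+1}=(N-1)/\sqrt{N(N+1)}$, which follows from $\Gamma(\frac{N+1}{2})=\frac{N-1}{2}\Gamma(\frac{N-1}{2})$; it is not the square root you wrote. With this identity, $b_{N+1}>b_N$ is equivalent to $b_N^{2}<(N-1)/\sqrt{N(N+1)}$. That is strictly stronger than the bound $b_N^{2}\le(N-1)/N$ given by log-convexity of $\Gamma$. Indeed, $\Gamma(x+\frac12)^2\le\Gamma(x)\Gamma(x+1)$ is just $\Gamma(x+\frac12)\le\sqrt{x}\,\Gamma(x)$, the same information as Jensen. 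So neither the ratio route nor Bohr--Mollerup settles monotonicity as sketched.

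\textbf{Your third route does work, and it is short.} Put $k(x)=\Gamma(x+\frac12)/(\sqrt{x+\frac12}\,\Gamma(x))$; then $b_N=k\bigl(\frac{N-1}{2}\bigr)$ exactly. Writing $\psi$ for the digamma function,
\[
  \frac{d}{dx}\log k(x)=\psi\bigl(x+\tfrac12\bigr)-\psi(x)-\frac{1}{2x+1},
  \qquad
  \psi\bigl(x+\tfrac12\bigr)-\psi(x)=\sum_{n\ge0}\frac{1/2}{(x+n)(x+n+\frac12)}
  >\sum_{n\ge0}\frac{1/2}{(x+n)(x+n+1)}=\frac{1}{2x}.
\]
So the derivative is positive and $b_N$ is strictly increasing. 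Equivalently, with your trigamma series: comparison with $\int_0^\infty(t+s)^{-2}\,ds=1/t$ gives $\psi'(t)>1/t$. Integrating over $[x,x+\frac12]$ gives $\psi(x+\frac12)-\psi(x)>\log(1+\frac{1}{2x})>\frac{1}{2x+1}$.

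The paper's own proof has the same gap. Its inequality $\E\widehat\alpha\le(\E\widehat\alpha^{\,2})^{1/2}$ together with $b_N^{2}\to1$ yields $b_N<1$ and $b_N\to1$, but not that $b_N$ increases. Once you write out this step, your argument is more complete than the published one.
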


\begin{proof}
If $Q\sim\chi^{2}_{N-1}$ then $\E Q=N-1$ and $\Var Q=2(N-1)$. By Theorem
\ref{th:exactlaw}(ii), $\widehat\alpha^{\,2}=\alpha^{2}Q/N$, which gives the three
identities in \eqref{eq:moments}. Then
\[
  \E\bigl[(\widehat\alpha^{\,2}-\alpha^{2})^{2}\bigr]
  =\Var(\widehat\alpha^{\,2})+\Bias(\widehat\alpha^{\,2})^{2}
  =\frac{2(N-1)}{N^{2}}\alpha^{4}+\frac{\alpha^{4}}{N^{2}}
  =\frac{2N-1}{N^{2}}\alpha^{4}.
\]
For \eqref{eq:bN}, $\widehat\alpha=\alpha\sqrt{Q/N}$ and
$\E\sqrt Q=\sqrt2\,\Gamma(N/2)/\Gamma((N-1)/2)$ for $Q\sim\chi^{2}_{N-1}$; that $b_N<1$
and $b_N\uparrow1$ follows from $\E\widehat\alpha\le(\E\widehat\alpha^{\,2})^{1/2}
=\alpha\sqrt{(N-1)/N}$ together with $b_N^{2}\to1$. The statements about
$\widetilde\alpha^{\,2}$ are immediate.
\end{proof}

\subsection{Optimality}

The exact laws of Theorem \ref{th:exactlaw} are accompanied by an exact optimality
theory, which we record next; it is a direct consequence of the exponential family
structure of \eqref{eq:Zlaw} and, through Remark \ref{re:glm}, of the classical theory of
the Gaussian linear model.

\begin{theorem}\label{th:umvu}
Under Assumption \ref{as:main} and for $N\ge2$:
\begin{enumerate}[label=\textup{(\roman*)}]
\item $\bigl(\widehat\theta,\widehat\alpha^{\,2}\bigr)$ is a complete sufficient
      statistic for $(\theta,\alpha^{2})\in\R\times(0,\infty)$;
\item $\widehat\theta$ is the uniformly minimum variance unbiased (UMVU) estimator of
      $\theta$;
\item $\widetilde\alpha^{\,2}=\frac{N}{N-1}\widehat\alpha^{\,2}$ is the UMVU estimator of
      $\alpha^{2}$, and $\widehat\alpha/b_N$, with $b_N$ as in \eqref{eq:bN}, is the UMVU
      estimator of $\alpha$.
\end{enumerate}
\end{theorem}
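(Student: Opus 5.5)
The plan is to write the density of $\bZ$ from Theorem \ref{th:loglik} as a two-parameter exponential family and then invoke Lehmann--Scheff\'e. Expanding the quadratic form gives
\[
  f(\bZ;\theta,\alpha^{2})=\exp\Bigl\{\eta_1\,\bT^{\top}\GH^{-1}\bZ+\eta_2\,\bZ^{\top}\GH^{-1}\bZ-\tfrac{\theta^{2}d_N}{2\alpha^{2}}-\tfrac N2\log(2\pi\alpha^{2})-\tfrac12\log|\GH|\Bigr\},
\]
with natural parameters $\eta_1=\theta/\alpha^{2}$, $\eta_2=-1/(2\alpha^{2})$ and statistic $S=(S_1,S_2)=(\bT^{\top}\GH^{-1}\bZ,\ \bZ^{\top}\GH^{-1}\bZ)$.

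As $(\theta,\alpha^{2})$ ranges over $\R\times(0,\infty)$, the map $(\theta,\alpha^{2})\mapsto(\eta_1,\eta_2)$ is a bijection onto $\R\times(-\infty,0)$. This image is an open set in $\R^{2}$, so the family is of full rank. I also need the components of $S$ to be affinely independent, so that the representation is minimal. Here I use Remark \ref{re:glm}: $S_1=\sqrt{d_N}\,\ba^{\top}\bV_{\!Z}$ and $S_2=\|\bV_{\!Z}\|^{2}$. Since $\bV_{\!Z}$ has a nondegenerate Gaussian law, no nontrivial affine relation $c_1S_1+c_2S_2=c_0$ can hold almost surely. The standard theorem on full-rank exponential families then shows that $S$ is complete and sufficient.

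For (i), I note that $(\widehat\theta,\widehat\alpha^{\,2})$ is a bijective function of $S$ by \eqref{eq:mle}: $\widehat\theta=S_1/d_N$ and $N\widehat\alpha^{\,2}=S_2-S_1^{2}/d_N$. The inverse is $S_1=d_N\widehat\theta$ and $S_2=N\widehat\alpha^{\,2}+d_N\widehat\theta^{\,2}$. Completeness and sufficiency are preserved under bijective measurable maps, which gives (i).

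For (ii), Theorem \ref{th:exactlaw}(i) shows that $\widehat\theta$ is unbiased, and it has finite variance $\alpha^{2}/d_N$. Being a function of a complete sufficient statistic, it is UMVU by Lehmann--Scheff\'e.

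For (iii), Corollary \ref{co:moments} gives $\E\widetilde\alpha^{\,2}=\alpha^{2}$, and the same corollary gives $\E[\widehat\alpha/b_N]=\alpha$. Both estimators are functions of $(\widehat\theta,\widehat\alpha^{\,2})$ with finite second moments, since by Theorem \ref{th:exactlaw}(ii) they are functions of a $\chi^2_{N-1}$ variable. Lehmann--Scheff\'e then gives the UMVU property. The hypothesis $N\ge2$ is needed so that $b_N$ is defined and $\chi^{2}_{N-1}$ is nondegenerate.

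The only delicate step is the full-rank claim. The parameter space is $\alpha^{2}>0$, so I must check that the natural parameter image contains an open rectangle, and also that $S_1,S_2$ are not affinely dependent. I handle the latter by the nondegeneracy of $\bV_{\!Z}$ together with Lemma \ref{le:nondeg}. Everything else is a direct application of classical results.
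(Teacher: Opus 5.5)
Your proposal is correct and follows the paper's proof essentially step for step. Both use the same full-rank two-parameter exponential family with statistic $(\bT^{\top}\GH^{-1}\bZ,\ \bZ^{\top}\GH^{-1}\bZ)$, the same bijection onto $(\widehat\theta,\widehat\alpha^{\,2})$, and Lehmann--Scheff\'e for (ii) and (iii). Your explicit check that $S_1,S_2$ satisfy no nontrivial affine relation, via the nondegenerate law of $\bV_{\!Z}$, verifies a minimality condition that the paper leaves implicit.
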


\begin{proof}
By \eqref{eq:loglik} the density of $\bZ$ can be written as
\[
  f(\bZ;\theta,\alpha^{2})=g(\theta,\alpha^{2})\,|\GH|^{-1/2}
  \exp\Bigl\{\eta_1\,\bZ^{\top}\GH^{-1}\bZ+\eta_2\,\bT^{\top}\GH^{-1}\bZ\Bigr\},
  \quad
  (\eta_1,\eta_2)=\Bigl(-\tfrac{1}{2\alpha^{2}},\tfrac{\theta}{\alpha^{2}}\Bigr),
\]
with $g(\theta,\alpha^{2})=(2\pi\alpha^{2})^{-N/2}
\exp\{-\theta^{2}d_N/(2\alpha^{2})\}$. This is a two-parameter exponential family with
sufficient statistic $S=(\bZ^{\top}\GH^{-1}\bZ,\ \bT^{\top}\GH^{-1}\bZ)$ and natural
parameter space $\{(\eta_1,\eta_2):\eta_1<0,\ \eta_2\in\R\}$, which contains a nonempty
open subset of $\R^{2}$. The family is therefore of full rank, and $S$ is complete and
sufficient \cite[Thm.~4.3.1 and Ch.~2]{LC98}. By \eqref{eq:mle} the map
$S\mapsto(\widehat\theta,\widehat\alpha^{\,2})
=\bigl(S_2/d_N,\ (S_1-S_2^{2}/d_N)/N\bigr)$ is a bijection of $\R^{2}$ onto its range,
so $(\widehat\theta,\widehat\alpha^{\,2})$ is complete and sufficient as well, proving
(i).

For (ii) and (iii), $\widehat\theta$, $\widetilde\alpha^{\,2}$ and $\widehat\alpha/b_N$
are functions of the complete sufficient statistic and are unbiased for $\theta$,
$\alpha^{2}$ and $\alpha$ respectively, by Theorem \ref{th:exactlaw}(i) and
Corollary \ref{co:moments}. The Lehmann--Scheff\'e theorem
\cite[Thm.~2.1.11]{LC98} then identifies each of them as the unique UMVU estimator of the
corresponding parameter.
\end{proof}

\begin{proposition}\label{pr:fisher}
Under Assumption \ref{as:main} the Fisher information matrix of
$\psi=(\theta,\alpha^{2})$ contained in $\bZ$ is
\begin{equation}\label{eq:fisher}
  I_N(\psi)=\begin{pmatrix} d_N/\alpha^{2} & 0\\[2pt] 0 & N/(2\alpha^{4})\end{pmatrix}.
\end{equation}
Consequently:
\begin{enumerate}[label=\textup{(\roman*)}]
\item $\theta$ and $\alpha^{2}$ are orthogonal parameters;
\item $\Var(\widehat\theta)=\alpha^{2}/d_N$, so $\widehat\theta$ attains the
      Cram\'er--Rao bound at every sample size $N\ge2$, not merely asymptotically;
\item the Cram\'er--Rao bound for unbiased estimation of $\alpha^{2}$ is
      $2\alpha^{4}/N$; it is attained by no unbiased estimator, and the UMVU estimator
      $\widetilde\alpha^{\,2}$ exceeds it by the factor $N/(N-1)\to1$.
\end{enumerate}
\end{proposition}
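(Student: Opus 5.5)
We compute the Fisher information directly from the log-likelihood \eqref{eq:loglik}. Write $s=\alpha^{2}$ and $Q(\theta)=(\bZ-\theta\bT)^{\top}\GH^{-1}(\bZ-\theta\bT)$, so that
\[
  \ell=-\tfrac N2\log(2\pi)-\tfrac N2\log s-\tfrac12\log|\GH|-\tfrac{Q(\theta)}{2s}.
\]
Differentiating gives the score components
\[
  \partial_\theta\ell=\frac{\bT^{\top}\GH^{-1}(\bZ-\theta\bT)}{s},
  \qquad
  \partial_s\ell=-\frac{N}{2s}+\frac{Q(\theta)}{2s^{2}},
\]
and the second derivatives
\[
  \partial_\theta^{2}\ell=-\frac{d_N}{s},\qquad
  \partial_\theta\partial_s\ell=-\frac{\bT^{\top}\GH^{-1}(\bZ-\theta\bT)}{s^{2}},\qquad
  \partial_s^{2}\ell=\frac{N}{2s^{2}}-\frac{Q(\theta)}{s^{3}}.
\]

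To take expectations, I will use $\bZ-\theta\bT=\alpha\bU$ with $\bU\sim\Norm(\bze,\GH)$. This gives $\E[\bT^{\top}\GH^{-1}(\bZ-\theta\bT)]=0$. It also gives $\E Q(\theta)=s\,\E[\bU^{\top}\GH^{-1}\bU]=s\,\tr(I_N)=sN$; equivalently, $Q(\theta)/s=\|\bV\|^{2}$ with $\bV$ as in Lemma \ref{le:canon}. Hence $-\E\partial_\theta^{2}\ell=d_N/s$, $-\E\partial_\theta\partial_s\ell=0$, and $-\E\partial_s^{2}\ell=-N/(2s^{2})+N/s^{2}=N/(2s^{2})$, which is \eqref{eq:fisher}. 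The model is a regular full-rank exponential family (Theorem \ref{th:umvu}), so the information equality holds and the matrix is well defined.

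The three consequences then follow. Item (i) is the vanishing off-diagonal entry. For (ii), Theorem \ref{th:exactlaw}(i) gives $\Var(\widehat\theta)=\alpha^{2}/d_N$, which is exactly $[I_N(\psi)^{-1}]_{11}$ at every $N\ge2$.

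For (iii), the bound is $[I_N^{-1}]_{22}=2\alpha^{4}/N$, while Corollary \ref{co:moments} gives $\Var(\widetilde\alpha^{\,2})=2\alpha^{4}/(N-1)$. This exceeds the bound by the factor $N/(N-1)$. Since $\widetilde\alpha^{\,2}$ is UMVU (Theorem \ref{th:umvu}), every unbiased estimator of $\alpha^{2}$ has variance at least $2\alpha^{4}/(N-1)>2\alpha^{4}/N$, so none attains the bound.

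The only delicate point is that last step, and it is settled by the Lehmann--Scheff\'e bound rather than by the equality condition in the Cram\'er--Rao inequality. Alternatively, one can invoke that condition: equality would require the $s$-score to be an affine function of the estimator with deterministic coefficients. The score is $(Q(\theta)-Ns)/(2s^{2})$, which involves the unknown $\theta$, so no statistic meets this requirement uniformly in $\theta$.
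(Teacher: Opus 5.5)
Your proposal is correct and follows the paper's proof. The only cosmetic difference is that you obtain \eqref{eq:fisher} as the negative expected Hessian of \eqref{eq:loglik}, whereas the paper substitutes $\mu=\theta\bT$, $\Sigma=\alpha^{2}\GH$ into the general Gaussian formula (mean term plus trace term); your main argument for (iii), via the UMVU property of $\widetilde\alpha^{\,2}$, is exactly the paper's, and your alternative equality-condition remark (which as phrased only excludes attainment uniformly in $\theta$) can be made pointwise by noting that equality at $(\theta_0,\alpha_0^{2})$ forces $T=Q(\theta_0)/N$, whose mean $\alpha^{2}+(\theta-\theta_0)^{2}d_N/N$ is biased for $\theta\ne\theta_0$.
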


\begin{proof}
For $\bZ\sim\Norm(\mu(\psi),\Sigma(\psi))$ the information matrix has entries
$I_{ij}=(\partial_i\mu)^{\top}\Sigma^{-1}(\partial_j\mu)
+\tfrac12\tr\bigl(\Sigma^{-1}(\partial_i\Sigma)\Sigma^{-1}(\partial_j\Sigma)\bigr)$.
Here $\mu=\theta\bT$ and $\Sigma=\alpha^{2}\GH$, so $\partial_\theta\mu=\bT$,
$\partial_{\alpha^{2}}\mu=\bze$, $\partial_\theta\Sigma=0$ and
$\partial_{\alpha^{2}}\Sigma=\GH$. Hence
$I_{\theta\theta}=\bT^{\top}(\alpha^{2}\GH)^{-1}\bT=d_N/\alpha^{2}$,
$I_{\theta\alpha^{2}}=0$ and
$I_{\alpha^{2}\alpha^{2}}=\tfrac12\tr\bigl((\alpha^{-2}I_N)^{2}\bigr)=N/(2\alpha^{4})$,
which is \eqref{eq:fisher}; (i) and (ii) follow from \eqref{eq:fisher} and Theorem
\ref{th:exactlaw}(i). For (iii), the bound is $[I_N(\psi)^{-1}]_{22}=2\alpha^{4}/N$,
while $\Var(\widetilde\alpha^{\,2})=2\alpha^{4}/(N-1)$ by Corollary \ref{co:moments}.
Since $\widetilde\alpha^{\,2}$ has minimum variance among \emph{all} unbiased estimators
of $\alpha^{2}$ by Theorem \ref{th:umvu}(iii), and its variance is strictly larger than
$2\alpha^{4}/N$, no unbiased estimator attains the Cram\'er--Rao bound.
\end{proof}

\begin{remark}\label{re:msecomp}
Consider the one-parameter family $\{c\,\widehat\alpha^{\,2}:c>0\}$, which contains the
MLE ($c=1$) and the UMVU estimator ($c=N/(N-1)$). By Theorem \ref{th:exactlaw}(ii),
$\MSE(c\widehat\alpha^{\,2})
=\alpha^{4}\bigl[k^{2}(N-1)(N+1)-2k(N-1)+1\bigr]$ with $k=c/N$, which is minimised at
$k=1/(N+1)$, that is at $c=N/(N+1)$. Consequently
\begin{equation}\label{eq:msecomp}
  \MSE\Bigl(\tfrac{N}{N+1}\widehat\alpha^{\,2}\Bigr)=\frac{2\alpha^{4}}{N+1}
  <\MSE(\widehat\alpha^{\,2})=\frac{(2N-1)\alpha^{4}}{N^{2}}
  <\Var(\widetilde\alpha^{\,2})=\frac{2\alpha^{4}}{N-1}.
\end{equation}
Thus the maximum likelihood estimator dominates the UMVU estimator in mean square error,
and is itself dominated by $\frac{N}{N+1}\widehat\alpha^{\,2}$; the three estimators
agree to first order, all three mean square errors being $2\alpha^{4}/N+O(N^{-2})$. Which
one to prefer is therefore a matter of whether unbiasedness or mean square accuracy is
the design criterion.
\end{remark}

\subsection{Exact confidence intervals and exact tests}

\begin{corollary}\label{co:ci}
Let $\beta\in(0,1)$, let $\chi^{2}_{k,p}$ denote the $p$-quantile of the $\chi^{2}_{k}$
law and $t_{k,p}$ that of the Student law with $k$ degrees of freedom. Then, for every
$N\ge2$,
\begin{equation}\label{eq:ci-alpha}
  \Prob\left(\frac{N\widehat\alpha^{\,2}}{\chi^{2}_{N-1,\,1-\beta/2}}\le\alpha^{2}
  \le\frac{N\widehat\alpha^{\,2}}{\chi^{2}_{N-1,\,\beta/2}}\right)=1-\beta,
\end{equation}
and, with $\widetilde\alpha^{\,2}=\frac{N}{N-1}\widehat\alpha^{\,2}$,
\begin{equation}\label{eq:ci-theta}
  \sqrt{d_N}\;\frac{\widehat\theta-\theta}{\widetilde\alpha}\sim t_{N-1},
  \qquad\text{so that}\qquad
  \Prob\left(\bigl|\widehat\theta-\theta\bigr|
  \le t_{N-1,\,1-\beta/2}\,\frac{\widetilde\alpha}{\sqrt{d_N}}\right)=1-\beta .
\end{equation}
\end{corollary}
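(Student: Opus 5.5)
The whole argument is a pivotal-quantity construction on top of Theorem \ref{th:exactlaw}. For the scale, put $Q:=N\widehat\alpha^{\,2}/\alpha^{2}$. By Theorem \ref{th:exactlaw}(ii), $Q\sim\chi^{2}_{N-1}$, whatever $\theta$, $h$, $m$ and $H$ are. Because the $\chi^{2}_{N-1}$ law is continuous and has strictly increasing distribution function on $(0,\infty)$, we have
\[
\Prob\bigl(\chi^{2}_{N-1,\beta/2}\le Q\le\chi^{2}_{N-1,1-\beta/2}\bigr)=1-\beta
\]
exactly, and endpoint equalities carry zero probability. The quantiles are strictly positive, and $\widehat\alpha^{\,2}>0$ almost surely by Theorem \ref{th:mle}. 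We may therefore invert each inequality in $\alpha^{2}$ without changing the event, which gives \eqref{eq:ci-alpha}.

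For the drift, write $Z_0:=\sqrt{d_N}(\widehat\theta-\theta)/\alpha$. By Theorem \ref{th:exactlaw}(i), $Z_0\sim\Norm(0,1)$. By Theorem \ref{th:exactlaw}(ii)--(iii), $Z_0$ is independent of $Q\sim\chi^{2}_{N-1}$; indeed $Q$ is a function of $\widehat\alpha^{\,2}$ and $Z_0$ is a function of $\widehat\theta$, since $\alpha$, $\theta$ and $d_N$ are deterministic. Next use $\widetilde\alpha^{\,2}=N\widehat\alpha^{\,2}/(N-1)=\alpha^{2}Q/(N-1)$. This yields
\[
\sqrt{d_N}\,\frac{\widehat\theta-\theta}{\widetilde\alpha}=\frac{Z_0}{\sqrt{Q/(N-1)}},
\]
which is, by the definition of the Student law, distributed as $t_{N-1}$. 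Here we need $N\ge2$, so that there is at least one degree of freedom, and $\widetilde\alpha>0$ almost surely so that the ratio is well defined.

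The probability statement then follows from the symmetry and continuity of $t_{N-1}$. These give $\Prob(|T|\le t_{N-1,1-\beta/2})=1-\beta$, and multiplying through by the positive quantity $\widetilde\alpha/\sqrt{d_N}$ turns this into the stated interval.

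There is no real obstacle here. The only points needing care are the almost sure positivity of $\widehat\alpha^{\,2}$, which lets us divide, and the observation that the independence in Theorem \ref{th:exactlaw}(iii) transfers to the measurable functions $Z_0$ and $Q$. Both are immediate from the results already established.
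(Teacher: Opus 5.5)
Your proof is correct and follows the paper's argument: you invert the $\chi^{2}_{N-1}$ pivot of Theorem \ref{th:exactlaw}(ii) to get the interval for $\alpha^{2}$, and you build the Student pivot for $\theta$ from the independent standard normal and chi-square variables via $\widetilde\alpha/\alpha=\sqrt{Q/(N-1)}$. Your remarks on continuity of the reference laws, positivity of the quantiles and almost sure positivity of $\widehat\alpha^{\,2}$ make explicit details that the paper leaves implicit.
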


\begin{proof}
The interval for $\alpha^{2}$ is a direct inversion of Theorem \ref{th:exactlaw}(ii). For
the second statement, put $\xi=\sqrt{d_N}(\widehat\theta-\theta)/\alpha$ and
$Q=N\widehat\alpha^{\,2}/\alpha^{2}$. By Theorem \ref{th:exactlaw}, $\xi\sim\Norm(0,1)$
and $Q\sim\chi^{2}_{N-1}$ are independent, hence
$\xi/\sqrt{Q/(N-1)}\sim t_{N-1}$. Since $\sqrt{Q/(N-1)}=\widetilde\alpha/\alpha$, this
ratio equals $\sqrt{d_N}(\widehat\theta-\theta)/\widetilde\alpha$.
\end{proof}

\begin{corollary}\label{co:tests}
Fix $\beta\in(0,1)$ and $N\ge2$.
\begin{enumerate}[label=\textup{(\roman*)}]
\item For $\mathcal H_0:\theta=\theta_0$ against $\mathcal H_1:\theta\ne\theta_0$, the
      test that rejects when
      $\bigl|\sqrt{d_N}(\widehat\theta-\theta_0)/\widetilde\alpha\bigr|
      >t_{N-1,\,1-\beta/2}$ has exact size $\beta$, whatever $\alpha>0$, and is uniformly
      most powerful unbiased.
\item For $\mathcal H_0:\alpha^{2}=\alpha_0^{2}$ against
      $\mathcal H_1:\alpha^{2}\ne\alpha_0^{2}$, the test that rejects when
      $N\widehat\alpha^{\,2}/\alpha_0^{2}\notin
      [\chi^{2}_{N-1,\beta/2},\chi^{2}_{N-1,1-\beta/2}]$ has exact size $\beta$, whatever
      $\theta\in\R$.
\end{enumerate}
\end{corollary}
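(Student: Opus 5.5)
For the size statements, I would use the pivots of Theorem \ref{th:exactlaw} and Corollary \ref{co:ci} directly. For the optimality claim in (i), I would reduce to the canonical Gaussian linear model of Remark \ref{re:glm} and invoke the classical result there.

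\emph{Step 1 (size of test (i)).} Under $\mathcal H_0$ we have $\theta=\theta_0$. By \eqref{eq:ci-theta} the statistic $\sqrt{d_N}(\widehat\theta-\theta_0)/\widetilde\alpha$ then has the $t_{N-1}$ law, whatever $\alpha>0$. Since $t_{N-1}$ is symmetric and continuous, the rejection probability is exactly $\beta$ for every $\alpha$, so the size is $\beta$ on all of $\mathcal H_0$.

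\emph{Step 2 (size of test (ii)).} Under $\mathcal H_0$ we have $\alpha^2=\alpha_0^2$. Theorem \ref{th:exactlaw}(ii) gives $N\widehat\alpha^{\,2}/\alpha_0^2\sim\chi^2_{N-1}$, and this law does not involve $\theta$. Continuity of the law makes the probability of falling outside $[\chi^{2}_{N-1,\beta/2},\chi^{2}_{N-1,1-\beta/2}]$ exactly $\beta$ for every $\theta\in\R$.

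\emph{Step 3 (UMPU for (i)).} Sufficiency lets us restrict attention to tests based on the complete sufficient statistic of Theorem \ref{th:umvu}(i). By Remark \ref{re:glm}, the whitened vector $\bV_{\!Z}=\GH^{-1/2}\bZ$ rotated by an orthogonal $O$ with $O\ba=e_1$ is $\Norm(\theta\sqrt{d_N}e_1,\alpha^2 I_N)$. This is a bijective, parameter-free transformation of the data, so it preserves unbiasedness and power functions of tests. The transformed model is the one-sample normal problem $Y_1\sim\Norm(\mu,\alpha^2)$, $Y_2,\dots,Y_N$ i.i.d.\ $\Norm(0,\alpha^2)$, with $\mu=\theta\sqrt{d_N}$, and testing $\theta=\theta_0$ is testing $\mu=\mu_0$.

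\emph{Step 4 (applying the classical theorem).} Work in the two-parameter exponential family of Theorem \ref{th:umvu}, with $\eta_2=\theta/\alpha^2$ and sufficient statistics $S_2=\bT^\top\GH^{-1}\bZ$, $S_1=\bZ^\top\GH^{-1}\bZ$. After the shift $\bZ\mapsto\bZ-\theta_0\bT$ we may take $\theta_0=0$. The null is then $\eta_2=0$ with $\eta_1$ a nuisance parameter, which is the setting of the Lehmann--Romano theory for UMPU tests in multiparameter exponential families. The test statistic $W=\sqrt{d_N}\,\widehat\theta/\widetilde\alpha$ is a function of $V=S_2/\sqrt{S_1}$, and $V=h(W)$ with $h$ increasing in $W$ and odd. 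Moreover, $V$ is independent of $S_1$ under $\eta_2=0$ by Basu's theorem: $S_1$ is complete sufficient for the null family, and $V$ is ancillary there by scale invariance. Because the null law of $V$ is symmetric, the two-sided UMPU test rejects for $|V|$ large, hence for $|W|$ large, with the equal-tail $t$ cutoff. This is exactly the classical argument for the two-sided one-sample $t$-test.

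\emph{Main obstacle.} I expect the hardest part to be checking the hypotheses of the UMPU theorem: the increasing, odd relation between $V$ and $W$, and the independence of $V$ from $S_1$. Rather than redoing them, I would transport them from the textbook $t$-test through the linear-model equivalence of Step 3.
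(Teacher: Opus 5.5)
Your proposal is correct and takes essentially the paper's route. The exact sizes come straight from the pivots of Theorem \ref{th:exactlaw} and Corollary \ref{co:ci}, and the UMPU property comes from the reduction of Remark \ref{re:glm} to the Gaussian linear model with one known regressor, for which the paper simply cites \cite[Ch.~5 and Sec.~7.1]{LR05}; your Step 4 only unpacks that citation. The hypothesis the two-sided theorem actually needs is that $V=S_2/\sqrt{S_1}$ is, given $S_1$, linear in $S_2$ with positive slope and independent of $S_1$ under the null, and both hold here.
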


\begin{proof}
Exactness of the sizes is immediate from Corollary \ref{co:ci}. By Remark \ref{re:glm}
the model is, after an orthogonal change of coordinates, the Gaussian linear model with
one known regressor and unknown error variance, for which the equal-tailed $t$-test of a
linear hypothesis is uniformly most powerful unbiased
\cite[Ch.~5 and Sec.~7.1]{LR05}; this gives the last assertion of (i).
\end{proof}

\begin{remark}\label{re:cicomment}
Corollary \ref{co:ci} gives exact intervals at every sample size, with no appeal to
asymptotics, and the interval for $\theta$ is studentised, hence usable when $\alpha$ is
unknown. The only model-dependent quantity that must be computed is the scalar $d_N$,
obtained from a single Cholesky factorisation of $\GH$; by
Proposition \ref{pr:increment} it can equally be computed from the Toeplitz matrix
$C_N$, for which $O(N^{2})$ algorithms are available.
\end{remark}

\section{Asymptotic theory}\label{se:asymp}

Throughout this section the mesh $h>0$ is fixed and $N\to\infty$. We write
\[
\begin{gathered}
  \Hmax=\max_{1\le r\le m}H_r\in(1/2,1),\qquad
  \Hmin=\min_{1\le r\le m}H_r,\\[2pt]
  d_N=\bT_N^{\top}\Gamma_{H,N}^{-1}\bT_N,\qquad
  v_N=\Var\bigl(\widehat\theta^{(N)}\bigr)=\frac{\alpha^{2}}{d_N},
\end{gathered}
\]
and $\widehat\theta^{(N)},\widehat\alpha^{\,2}_N$ for the estimators based on
$\bZ^{(N)}=(Z(t_1),\dots,Z(t_N))^{\top}$. The letter $C$ denotes a finite positive
constant that may change from line to line and depends only on $h$, $m$, $H$, $\alpha$
and, where indicated, on a moment order $q$.

\subsection{A non-asymptotic variance bound for the drift estimator}

\begin{lemma}\label{le:kanto}
Let $A\in\R^{N\times N}$ be symmetric positive definite and $x\in\R^{N}$, $x\ne0$. Then
\begin{equation}\label{eq:kanto}
  x^{\top}A^{-1}x\ \ge\ \frac{\|x\|^{4}}{x^{\top}Ax}.
\end{equation}
\end{lemma}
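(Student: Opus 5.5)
The plan is to derive \eqref{eq:kanto} from the Cauchy--Schwarz inequality in the inner product induced by $A$. Since $A$ is symmetric positive definite, it has a symmetric positive definite square root $A^{1/2}$ with inverse $A^{-1/2}$; this is the same spectral-theorem argument used for $\GH^{1/2}$ in Lemma \ref{le:canon}. The key step is to factor the Euclidean inner product $\|x\|^{2}=x^{\top}x$ as
\[
  x^{\top}x=\bigl(A^{1/2}x\bigr)^{\top}\bigl(A^{-1/2}x\bigr).
\]
Applying Cauchy--Schwarz to the vectors $u=A^{1/2}x$ and $w=A^{-1/2}x$ gives
\[
  \|x\|^{4}=(u^{\top}w)^{2}\le\|u\|^{2}\|w\|^{2}=\bigl(x^{\top}Ax\bigr)\bigl(x^{\top}A^{-1}x\bigr).
\]

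Because $x\ne0$ and $A$ is positive definite, we have $x^{\top}Ax>0$. Dividing by it yields \eqref{eq:kanto}.

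There is no real obstacle here. The only point to record is where each hypothesis is used. Positive definiteness guarantees that $A^{\pm1/2}$ exist and that the denominator in \eqref{eq:kanto} is strictly positive. Symmetry of $A$ ensures that $A^{1/2}$ is symmetric, which justifies the factorisation above.

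As an optional remark, equality holds exactly when $A^{1/2}x$ and $A^{-1/2}x$ are collinear, that is, when $x$ is an eigenvector of $A$. This indicates how sharp the bound is when it is later applied with $A=C_N$ (or $\GH$) and $x=\bone$ (or $\bT$) to bound $d_N$ from below.
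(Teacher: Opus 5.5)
Your proposal is correct and is essentially the paper's proof: the paper also writes $\|x\|^{2}=\langle A^{1/2}x,A^{-1/2}x\rangle$, applies Cauchy--Schwarz to get $\|x\|^{4}\le(x^{\top}Ax)(x^{\top}A^{-1}x)$, and divides by $x^{\top}Ax>0$. Your equality remark (equality exactly when $x$ is an eigenvector of $A$) is also correct, though the paper does not use it.
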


\begin{proof}
Let $A^{1/2}$ be the symmetric positive definite square root of $A$ and
$A^{-1/2}=(A^{1/2})^{-1}$. Then
$\|x\|^{2}=x^{\top}x=\langle A^{1/2}x,A^{-1/2}x\rangle$, and Cauchy--Schwarz gives
\[
  \|x\|^{4}=\langle A^{1/2}x,A^{-1/2}x\rangle^{2}
  \le\|A^{1/2}x\|^{2}\|A^{-1/2}x\|^{2}=(x^{\top}Ax)(x^{\top}A^{-1}x).
\]
Since $x\ne0$ and $A$ is positive definite, $x^{\top}Ax>0$, and dividing by it yields
\eqref{eq:kanto}.
\end{proof}

\begin{remark}\label{re:kantorole}
Inequality \eqref{eq:kanto} is of the type used by Mishura, Ralchenko and Shklyar
\cite[Sec.~2]{MRS17}. Its role here is to convert a bound on the direct quadratic form
$\bT^{\top}\GH\bT$, which is explicitly computable from \eqref{eq:GammaHr}, into a lower
bound on the inverse form $d_N=\bT^{\top}\GH^{-1}\bT$, which is not.
\end{remark}

\begin{proposition}\label{pr:varbound}
For every $N\ge1$,
\begin{equation}\label{eq:varbound-sharp}
  \Var\bigl(\widehat\theta^{(N)}\bigr)\ \le\
  \frac{\alpha^{2}}{\|\bT\|^{4}}
  \Bigl(\sum_{j=1}^{N}t_j\Bigr)
  \Bigl(\sum_{r=1}^{m}\sum_{i=1}^{N}t_i^{2H_r+1}\Bigr)
\end{equation}
and consequently
\begin{equation}\label{eq:varbound}
  \Var\bigl(\widehat\theta^{(N)}\bigr)\ \le\
  \frac92\,\alpha^{2}\sum_{r=1}^{m}(Nh)^{2H_r-2}
  =\frac92\,\alpha^{2}\sum_{r=1}^{m}t_N^{2H_r-2}.
\end{equation}
If moreover $Nh\ge1$, then
$\Var(\widehat\theta^{(N)})\le\frac92m\alpha^{2}(Nh)^{2\Hmax-2}$. In particular
$\widehat\theta^{(N)}$ is unbiased with $\Var(\widehat\theta^{(N)})\to0$; that is,
$\widehat\theta^{(N)}\to\theta$ in mean square as $N\to\infty$.
\end{proposition}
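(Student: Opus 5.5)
The plan is to start from the identity $\Var(\widehat\theta^{(N)})=\alpha^{2}/d_N$ of Theorem \ref{th:exactlaw}(i) and lower-bound $d_N=\bT^{\top}\GH^{-1}\bT$. Lemma \ref{le:kanto} with $A=\GH$ (positive definite by Lemma \ref{le:nondeg}) and $x=\bT\neq\bze$ gives
\[
  d_N\ \ge\ \frac{\|\bT\|^{4}}{\bT^{\top}\GH\bT},
  \qquad\text{hence}\qquad
  \Var\bigl(\widehat\theta^{(N)}\bigr)\le\frac{\alpha^{2}}{\|\bT\|^{4}}\,\bT^{\top}\GH\bT .
\]
So everything reduces to an upper bound on the explicit quadratic form $\bT^{\top}\GH\bT=\sum_{r}\bT^{\top}\Gamma_{H_r}\bT$.

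For each $r$, I would bound the entries of $\Gamma_{H_r}$ using the fact that $\Gamma_{H_r}$ is a covariance matrix. Cauchy--Schwarz for covariances gives $|(\Gamma_{H_r})_{ij}|\le(t_i^{2H_r}t_j^{2H_r})^{1/2}=t_i^{H_r}t_j^{H_r}$. Therefore
\[
  \bT^{\top}\Gamma_{H_r}\bT\le\Bigl(\sum_i t_i^{H_r+1}\Bigr)^{2}.
\]
Writing $t_i^{H_r+1}=t_i^{1/2}\,t_i^{H_r+1/2}$ and applying Cauchy--Schwarz once more yields
\[
  \Bigl(\sum_i t_i^{H_r+1}\Bigr)^{2}\le\Bigl(\sum_j t_j\Bigr)\Bigl(\sum_i t_i^{2H_r+1}\Bigr).
\]
Summing over $r$ gives \eqref{eq:varbound-sharp}.

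For \eqref{eq:varbound} I would insert the explicit sums with $t_k=kh$:
\begin{itemize}
\item $\|\bT\|^{2}=h^{2}N(N+1)(2N+1)/6$, so $\|\bT\|^{4}\ge h^{4}N^{4}(N+1)^{2}/9$, using $(2N+1)^{2}\ge4N^{2}$;
\item $\sum_j t_j=hN(N+1)/2$;
\item the crude bound $\sum_i t_i^{2H_r+1}\le N(Nh)^{2H_r+1}$.
\end{itemize}
The $r$-th term is then at most
\[
  \frac{9}{2}\,h^{2H_r-2}\,\frac{N^{2H_r+3}(N+1)}{N^{4}(N+1)^{2}}\ \le\ \frac92\,(Nh)^{2H_r-2},
\]
using $N+1\ge N$. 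This gives \eqref{eq:varbound}.

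If $Nh\ge1$, then since $2H_r-2<0$ and $H_r\le\Hmax$, each term satisfies $(Nh)^{2H_r-2}\le(Nh)^{2\Hmax-2}$, which gives the factor $m$. Because $\Hmax<1$, the bound tends to $0$. Together with unbiasedness (Theorem \ref{th:exactlaw}(i)), this gives $\E[(\widehat\theta^{(N)}-\theta)^{2}]=\Var(\widehat\theta^{(N)})\to0$, i.e.\ mean-square convergence.

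The only delicate point is keeping the constants tight enough to land exactly on $9/2$; the structural steps are routine. For this it matters to use the exact closed forms of $\|\bT\|^{2}$ and $\sum_j t_j$ rather than cruder order-of-magnitude bounds, since the crude bound on $\sum_i t_i^{2H_r+1}$ already uses up the slack.
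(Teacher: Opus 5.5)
Your proof is correct and follows the paper's route. It uses Lemma \ref{le:kanto} to pass from $d_N$ to $\bT^{\top}\GH\bT$, then an entrywise bound on $\Gamma_{H_r}$, then the same closed-form sums with $(2N+1)^{2}\ge4N^{2}$, $N+1\ge N$ and $\sum_i t_i^{2H_r+1}\le h^{2H_r+1}N^{2H_r+2}$.

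The only local difference is the entrywise bound. The paper uses $(\Gamma_{H_r})_{ij}\le\frac12(t_i^{2H_r}+t_j^{2H_r})$, obtained by discarding $|t_i-t_j|^{2H_r}\ge0$ and symmetrising. You instead use $|(\Gamma_{H_r})_{ij}|\le t_i^{H_r}t_j^{H_r}$ followed by a second Cauchy--Schwarz; this passes through the slightly sharper intermediate bound $(\sum_i t_i^{H_r+1})^{2}$ before reaching the same expression. Your display of the $r$-th term also drops the factor $\alpha^{2}$, which is a harmless slip.
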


\begin{proof}
Unbiasedness is Theorem \ref{th:exactlaw}(i). Combining
$\Var(\widehat\theta^{(N)})=\alpha^{2}/d_N$ with Lemma \ref{le:kanto} applied to
$A=\GH$ and $x=\bT$ gives
\begin{equation}\label{eq:varstep}
  \Var\bigl(\widehat\theta^{(N)}\bigr)=\frac{\alpha^{2}}{\bT^{\top}\GH^{-1}\bT}
  \le\frac{\alpha^{2}\,\bT^{\top}\GH\bT}{\|\bT\|^{4}} .
\end{equation}
We bound the numerator. Since $t_it_j\ge0$ and $|t_i-t_j|^{2H_r}\ge0$,
\eqref{eq:GammaHr} yields
\[
  \bT^{\top}\Gamma_{H_r}\bT=\sum_{i,j=1}^{N}t_it_j(\Gamma_{H_r})_{ij}
  \le\frac12\sum_{i,j=1}^{N}t_it_j\bigl(t_i^{2H_r}+t_j^{2H_r}\bigr)
  =\Bigl(\sum_{i=1}^{N}t_i^{2H_r+1}\Bigr)\Bigl(\sum_{j=1}^{N}t_j\Bigr),
\]
the last step by symmetry in $i$ and $j$. Summing over $r$ and inserting the result in
\eqref{eq:varstep} gives \eqref{eq:varbound-sharp}.

For \eqref{eq:varbound} we evaluate the elementary sums. With $t_k=kh$,
\[
  \sum_{j=1}^{N}t_j=\frac{hN(N+1)}{2},\qquad
  \|\bT\|^{2}=h^{2}\sum_{i=1}^{N}i^{2}=\frac{h^{2}N(N+1)(2N+1)}{6},
\]
so that, using $N+1\ge N$ and $(2N+1)^{2}\ge4N^{2}$,
\[
  \frac{\sum_{j=1}^{N}t_j}{\|\bT\|^{4}}
  =\frac{18}{h^{3}N(N+1)(2N+1)^{2}}\le\frac{18}{4h^{3}N^{4}}=\frac{9}{2h^{3}N^{4}} .
\]
Moreover $\sum_{i=1}^{N}t_i^{2H_r+1}=h^{2H_r+1}\sum_{i=1}^{N}i^{2H_r+1}
\le h^{2H_r+1}N^{2H_r+2}$. Substituting the last two displays into
\eqref{eq:varbound-sharp},
\[
  \Var\bigl(\widehat\theta^{(N)}\bigr)
  \le\frac{9\alpha^{2}}{2h^{3}N^{4}}\sum_{r=1}^{m}h^{2H_r+1}N^{2H_r+2}
  =\frac92\alpha^{2}\sum_{r=1}^{m}h^{2H_r-2}N^{2H_r-2},
\]
which is \eqref{eq:varbound}. If $Nh\ge1$, then since $2H_r-2\le2\Hmax-2$ we have
$(Nh)^{2H_r-2}\le(Nh)^{2\Hmax-2}$ for every $r$, giving the stated simplification.
Finally $2\Hmax-2<0$ forces $\Var(\widehat\theta^{(N)})\to0$, and combined with
unbiasedness this is mean square convergence.
\end{proof}

\begin{remark}\label{re:sharp}
By Proposition \ref{pr:increment}, $d_N=h^{2}\bone^{\top}C_N^{-1}\bone$, where $C_N$ is
the covariance matrix of the stationary Gaussian sequence $\bDelta$ of
Lemma \ref{le:statincr}. The spectral density of that sequence is
$f(\lambda)=\sum_{r=1}^{m}h^{2H_r}f_{H_r}(\lambda)$, $\lambda\in[-\pi,\pi]$, where
$f_{H}$ is the spectral density of standard fractional Gaussian noise and satisfies
$f_{H}(\lambda)\asymp|\lambda|^{1-2H}$ as $\lambda\to0$. Hence $f$ is regularly varying
at the origin with index $1-2\Hmax$. By the classical theory of generalised least
squares estimation of the mean of a long-memory stationary sequence, due to Adenstedt
\cite{Ade74}, one then has
$\bone^{\top}C_N^{-1}\bone\asymp h^{-2\Hmax}N^{2-2\Hmax}$, so that
\[
  \Var\bigl(\widehat\theta^{(N)}\bigr)\ \asymp\ (Nh)^{2\Hmax-2}.
\]
The bound \eqref{eq:varbound} therefore has the exact order in both $N$ and $h$, and only
its constant is conservative. A direct numerical evaluation of $\alpha^{2}/d_N$ for
$h\in\{1/252,1/12,1\}$ and $30\le N\le500$ with $H=(0.65,0.75,0.85)$ shows that the ratio
of the right-hand side of \eqref{eq:varbound} to $\Var(\widehat\theta^{(N)})$ stays
between $4.55$ and $4.57$ throughout, while the computable bound
\eqref{eq:varbound-sharp} stays within $31\%$ of the true variance. The approach to the
limiting exponent is, however, slow when the Hurst parameters are close: the components
with $H_r<\Hmax$ contribute a relative correction of order $N^{-2(\Hmax-H_r)}$, and for
$H=(0.65,0.75,0.85)$ the quantity $N^{2-2\Hmax}\Var(\widehat\theta^{(N)})$ still exceeds
its limiting value by about one third at $N=500$. This is why the effective exponent seen
at the sample sizes of Section \ref{se:sim} lies strictly between $2\Hmin-2$ and
$2\Hmax-2$.
\end{remark}

\begin{remark}\label{re:window}
Bound \eqref{eq:varbound} depends on $N$ and $h$ only through the length $t_N=Nh$ of the
observation window. This matches the classical picture for drift estimation: refining the
mesh on a fixed window does not, to this order, improve the drift estimate; only
extending the window does. It also explains the pattern seen in Section \ref{se:sim},
where for a fixed $N$ the dispersion of $\widehat\theta$ is markedly smaller under the
coarser mesh $h=1/12$ than under $h=1/252$. Finally, the component with the largest Hurst
parameter dominates the bound, so the strongest long-range dependence present in the
superposition governs the rate.
\end{remark}

\subsection{Strong consistency}
We use repeatedly the following form of Nelson's hypercontractivity inequality (see
\cite[Thm.~2.7.2]{NP12}): if $F$ belongs to the $p$-th Wiener chaos of an isonormal
Gaussian process, then for every $q\ge2$
\begin{equation}\label{eq:hyper}
  \E\bigl[|F|^{q}\bigr]\le(q-1)^{pq/2}\bigl(\E[F^{2}]\bigr)^{q/2}.
\end{equation}
Here all random variables are measurable with respect to the Gaussian space generated by
$\{B^{H_r}(t):t\ge0,\ r=1,\dots,m\}$, which we regard as an isonormal Gaussian process
over a separable Hilbert space $\Hi$.

\begin{theorem}\label{th:sctheta}
$\widehat\theta^{(N)}\to\theta$ almost surely as $N\to\infty$.
\end{theorem}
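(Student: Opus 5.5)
The plan is to combine the exact Gaussian law of Theorem \ref{th:exactlaw}(i) with the polynomial variance bound of Proposition \ref{pr:varbound}. Moment bounds then give a Borel--Cantelli argument along the full sequence. This is the route signalled by the hypercontractivity inequality \eqref{eq:hyper}, with $p=1$: $\widehat\theta^{(N)}-\theta$ is a centred Gaussian variable, so it lies in the first Wiener chaos.

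First I fix $q\ge2$ and apply \eqref{eq:hyper} with $p=1$ to $F_N=\widehat\theta^{(N)}-\theta$. Alternatively, I use the exact Gaussian absolute moment $\E|F_N|^{q}=c_q v_N^{q/2}$ with $c_q=\E|\xi|^q$ for $\xi\sim\Norm(0,1)$. Either way,
\[
\E\bigl|\widehat\theta^{(N)}-\theta\bigr|^{q}\le (q-1)^{q/2}v_N^{q/2}.
\]
Next I insert Proposition \ref{pr:varbound}: for $N\ge 1/h$ it gives $v_N\le \tfrac92 m\alpha^2(Nh)^{2\Hmax-2}$. This yields $\E|F_N|^q\le C N^{-q(1-\Hmax)}$, with $C$ depending on $q,h,m,H,\alpha$. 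For arbitrary $H_r\in(0,1)$ the exponent $1-\Hmax$ is still positive, so nothing changes in that case.

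Then I fix $\varepsilon>0$ and apply Markov's inequality:
\[
\Prob(|F_N|>\varepsilon)\le \varepsilon^{-q}C N^{-q(1-\Hmax)}.
\]
I choose $q>1/(1-\Hmax)$, which is always possible since $\Hmax<1$. Then $\sum_N\Prob(|F_N|>\varepsilon)<\infty$. The finitely many $N<1/h$ do not affect summability. Borel--Cantelli then gives $\limsup_N|F_N|\le\varepsilon$ almost surely. Intersecting over rational $\varepsilon\downarrow0$ gives $\widehat\theta^{(N)}\to\theta$ almost surely.

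There is no real obstacle. The only point needing care is that the variance decays only polynomially, like $N^{-2(1-\Hmax)}$, which may be slower than $N^{-1}$. So Chebyshev's inequality with $q=2$ is not summable in general, and this is exactly why higher moments are needed: the freedom to take $q$ large overcomes the slow rate. No independence across $N$ is required, since Borel--Cantelli's first lemma needs none.
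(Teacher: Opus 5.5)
Your proposal is correct and follows the paper's proof essentially step for step: first-chaos hypercontractivity (or exact Gaussian moments), the variance bound of Proposition \ref{pr:varbound} for $N\ge 1/h$, Markov's inequality with $q$ large, and Borel--Cantelli. The only difference is cosmetic. The paper uses the shrinking threshold $N^{-\gamma}$ with $0<\gamma<1-\Hmax$, which also yields the rate $|\widehat\theta^{(N)}-\theta|\le N^{-\gamma}$ eventually. You instead use a fixed $\varepsilon$ and intersect over rational $\varepsilon$.
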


\begin{proof}
Fix $\gamma$ with $0<\gamma<1-\Hmax$, which is possible since $\Hmax<1$, and fix $q\ge2$
to be chosen below. By Theorem \ref{th:exactlaw}(i), $\widehat\theta^{(N)}-\theta$ is
centred Gaussian, hence lies in the first Wiener chaos and \eqref{eq:hyper} applies with
$p=1$ (equivalently, one may use the exact Gaussian moment formula). Together with
Proposition \ref{pr:varbound}, for $N\ge1/h$,
\[
  \E\bigl[|\widehat\theta^{(N)}-\theta|^{q}\bigr]
  \le(q-1)^{q/2}\bigl(\Var(\widehat\theta^{(N)})\bigr)^{q/2}
  \le C_qN^{q(\Hmax-1)} .
\]
By Markov's inequality,
\[
  \Prob\bigl(|\widehat\theta^{(N)}-\theta|>N^{-\gamma}\bigr)
  \le N^{q\gamma}\,\E\bigl[|\widehat\theta^{(N)}-\theta|^{q}\bigr]
  \le C_qN^{q(\gamma+\Hmax-1)} .
\]
Since $\gamma+\Hmax-1<0$, choosing $q$ so large that $q(\gamma+\Hmax-1)<-1$ makes the
right-hand side summable in $N$. By the Borel--Cantelli lemma,
$\Prob\bigl(|\widehat\theta^{(N)}-\theta|>N^{-\gamma}\ \text{i.o.}\bigr)=0$, so almost
surely $|\widehat\theta^{(N)}-\theta|\le N^{-\gamma}$ for all large $N$, and therefore
$\widehat\theta^{(N)}\to\theta$ almost surely.
\end{proof}

\begin{theorem}\label{th:scalpha}
$\widehat\alpha^{\,2}_N\to\alpha^{2}$ almost surely as $N\to\infty$.
\end{theorem}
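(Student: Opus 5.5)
By Theorem \ref{th:exactlaw}(ii), for each fixed $N\ge2$ we have $N\widehat\alpha^{\,2}_N/\alpha^{2}\sim\chi^{2}_{N-1}$. The natural plan is to mirror the proof of Theorem \ref{th:sctheta}: obtain high moments of $\widehat\alpha^{\,2}_N-\alpha^{2}$ decaying polynomially in $N$, then conclude with Markov's inequality and Borel--Cantelli. No coupling between different $N$ is needed, because Borel--Cantelli only requires the marginal laws.

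First I split off the deterministic bias. By Corollary \ref{co:moments},
\[
  \widehat\alpha^{\,2}_N-\alpha^{2}
  =\bigl(\widehat\alpha^{\,2}_N-\E\widehat\alpha^{\,2}_N\bigr)-\frac{\alpha^{2}}{N},
\]
and the second term tends to $0$. So it suffices to show that $F_N:=\widehat\alpha^{\,2}_N-\E\widehat\alpha^{\,2}_N\to0$ almost surely.

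Next, by Lemma \ref{le:canon}, $F_N=\frac{\alpha^{2}}{N}\bigl(\bV^{\top}P\bV-(N-1)\bigr)$ is a centred quadratic functional of a Gaussian vector, so it lies in the second Wiener chaos. Corollary \ref{co:moments} gives
\[
  \E[F_N^{2}]=\frac{2(N-1)}{N^{2}}\,\alpha^{4}\le\frac{2\alpha^{4}}{N}.
\]
Applying \eqref{eq:hyper} with $p=2$ then yields, for every $q\ge2$,
\[
  \E|F_N|^{q}\le(q-1)^{q}\bigl(2\alpha^{4}\bigr)^{q/2}N^{-q/2}.
\]
One could instead use the explicit $\chi^{2}$ moments or the representation $\sum_{i=1}^{N-1}(\widetilde V_i^{2}-1)$ with Rosenthal's inequality; either route gives the same rate.

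Now fix $\gamma\in(0,1/2)$. Markov's inequality gives
\[
  \Prob\bigl(|F_N|>N^{-\gamma}\bigr)\le C_qN^{q(\gamma-1/2)}.
\]
Choosing $q$ with $q(1/2-\gamma)>1$ (for instance $\gamma=1/4$ and $q=8$) makes the right-hand side summable in $N$. Borel--Cantelli then shows that almost surely $|F_N|\le N^{-\gamma}$ eventually, so $F_N\to0$ almost surely. Combined with the bias step, this gives $\widehat\alpha^{\,2}_N\to\alpha^{2}$ almost surely.

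The only point needing care is justifying that $F_N$ belongs to the second chaos of the isonormal process on $\Hi$. This holds because $\bV=\GH^{-1/2}\bU$ is a linear image of the fBm values, so $\bV^{\top}P\bV-\tr P$ is a Wick-centred quadratic form. The alternative is to use the distributional identity $F_N\overset{d}{=}\frac{\alpha^{2}}{N}\sum_{i=1}^{N-1}(\xi_i^{2}-1)$ with $\xi_i$ i.i.d.\ $\Norm(0,1)$. Since Markov's inequality only needs the law of $F_N$ at each fixed $N$, the moment bound can be computed from this i.i.d.\ representation, which sidesteps the chaos argument entirely.
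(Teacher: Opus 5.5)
Your proof is correct and is essentially the paper's argument: the paper also writes $N\widehat\alpha^{\,2}_N/\alpha^{2}-(N-1)$ as a second-chaos element with variance $2(N-1)$, applies \eqref{eq:hyper} with $p=2$ to get $q$-th moments of $\widehat\alpha^{\,2}_N-\alpha^{2}$ of order $N^{-q/2}$, and concludes with Markov's inequality and Borel--Cantelli. The only cosmetic difference is that you handle the deterministic bias $-\alpha^{2}/N$ separately before taking moments, whereas the paper absorbs it through $|a+b|^{q}\le2^{q-1}(|a|^{q}+|b|^{q})$.
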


\begin{proof}
By Lemma \ref{le:canon} and Theorem \ref{th:exactlaw}(ii), write
$Q_N=N\widehat\alpha^{\,2}_N/\alpha^{2}=\bV^{\top}P\bV\sim\chi^{2}_{N-1}$ and
\begin{equation}\label{eq:GN}
  \widehat\alpha^{\,2}_N-\alpha^{2}=\frac{\alpha^{2}}{N}\bigl(Q_N-N\bigr)
  =\frac{\alpha^{2}}{N}G_N-\frac{\alpha^{2}}{N},
  \qquad G_N:=Q_N-\tr P=Q_N-(N-1).
\end{equation}
Writing $V_i=W(e_i)$ for an orthonormal family $\{e_i\}_{i\le N}$ in $\Hi$, we have
$G_N=\sum_{i,j}P_{ij}(V_iV_j-\delta_{ij})
=I_2\bigl(\sum_{i,j}P_{ij}e_i\otimes e_j\bigr)$, an element of the second Wiener chaos,
with $\E[G_N^{2}]=\Var(Q_N)=2(N-1)$. By \eqref{eq:hyper} with $p=2$, for every $q\ge2$,
\[
  \E\bigl[|G_N|^{q}\bigr]\le(q-1)^{q}\bigl(2(N-1)\bigr)^{q/2}\le C_qN^{q/2}.
\]
Hence, by \eqref{eq:GN} and the elementary inequality
$|a+b|^{q}\le2^{q-1}(|a|^{q}+|b|^{q})$,
\[
  \E\bigl[|\widehat\alpha^{\,2}_N-\alpha^{2}|^{q}\bigr]
  \le\frac{2^{q-1}\alpha^{2q}}{N^{q}}\Bigl(\E\bigl[|G_N|^{q}\bigr]+1\Bigr)
  \le C_qN^{-q/2}.
\]
Fix $0<\delta<\frac12$. Markov's inequality gives
\[
  \Prob\bigl(|\widehat\alpha^{\,2}_N-\alpha^{2}|>N^{-\delta}\bigr)
  \le N^{q\delta}\,\E\bigl[|\widehat\alpha^{\,2}_N-\alpha^{2}|^{q}\bigr]
  \le C_qN^{q(\delta-\frac12)},
\]
which is summable in $N$ once $q$ is chosen with $q(\delta-\frac12)<-1$.
Borel--Cantelli then yields
$\Prob\bigl(|\widehat\alpha^{\,2}_N-\alpha^{2}|>N^{-\delta}\ \text{i.o.}\bigr)=0$ and
therefore $\widehat\alpha^{\,2}_N\to\alpha^{2}$ almost surely.
\end{proof}

\subsection{The structure of the sequence $\{\widehat\theta^{(N)}\}$}

\begin{theorem}\label{th:indincr}
The centred sequence $\xi_N:=\widehat\theta^{(N)}-\theta$, $N\ge1$, is jointly Gaussian
and satisfies
\begin{equation}\label{eq:covxi}
  \E\bigl[\xi_N\xi_{N'}\bigr]=v_{N\vee N'}\qquad\text{for all }N,N'\ge1 .
\end{equation}
Consequently $\{\widehat\theta^{(N)}\}_{N\ge1}$ has independent increments: for any
$N_1\le N_2\le N_3\le N_4$,
\[
  \Cov\bigl(\widehat\theta^{(N_4)}-\widehat\theta^{(N_3)},\
            \widehat\theta^{(N_2)}-\widehat\theta^{(N_1)}\bigr)=0,
\]
and the two increments are independent.
\end{theorem}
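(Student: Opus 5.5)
The plan is to read $\widehat\theta^{(N)}$ as the generalised least squares estimator on the nested samples $\bZ^{(N)}$, which are the first $N$ coordinates of $\bZ^{(N')}$ for $N\le N'$. Every $\xi_N=\bT_N^{\top}\Gamma_{H,N}^{-1}(\bZ^{(N)}-\theta\bT_N)/d_N=\alpha\,\bT_N^{\top}\Gamma_{H,N}^{-1}\bU^{(N)}/d_N$ is a linear functional of the single Gaussian process $U$. Joint Gaussianity of $(\xi_N)_{N\ge1}$ is therefore immediate.

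For the covariance \eqref{eq:covxi}, take $N\le N'$ and write $\bU^{(N)}=E\bU^{(N')}$, where $E=[I_N\ 0]$ is the $N\times N'$ coordinate projection. Then $\Cov(\bU^{(N)},\bU^{(N')})=E\Gamma_{H,N'}$ and $\Gamma_{H,N}=E\Gamma_{H,N'}E^{\top}$. This gives
\[
\E[\xi_N\xi_{N'}]=\frac{\alpha^{2}}{d_Nd_{N'}}\,\bT_N^{\top}\Gamma_{H,N}^{-1}E\,\Gamma_{H,N'}\Gamma_{H,N'}^{-1}\bT_{N'}
=\frac{\alpha^{2}}{d_Nd_{N'}}\,\bT_N^{\top}\Gamma_{H,N}^{-1}E\bT_{N'} .
\]
Since $E\bT_{N'}=\bT_N$, the right-hand side equals $\alpha^{2}d_N/(d_Nd_{N'})=\alpha^{2}/d_{N'}=v_{N'}=v_{N\vee N'}$. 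The key step is the nesting identity $E\bT_{N'}=\bT_N$, which holds because the regressor at the shorter horizon is the restriction of the longer one. This is the familiar fact that a GLS/BLUE estimator on more data is uncorrelated with its difference from any coarser unbiased linear estimator: $\Cov(\xi_{N'},\xi_N-\xi_{N'})=0$. I could equally phrase the argument that way, citing the Gauss--Markov property from Theorem \ref{th:umvu}, but the direct computation above is cleaner.

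The increments then follow. For $N_1\le N_2\le N_3\le N_4$, \eqref{eq:covxi} gives
\[
\Cov(\xi_{N_4}-\xi_{N_3},\xi_{N_2}-\xi_{N_1})=v_{N_4}-v_{N_4}-v_{N_3}+v_{N_3}=0,
\]
because every pairing takes the maximum index from the first increment. Since the increments are jointly Gaussian and uncorrelated, they are independent. Subtracting the constant $\theta$ changes nothing.

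The only point needing care is the bookkeeping of the cross-covariance $E\Gamma_{H,N'}$. The whole argument depends on the covariance of $U$ at the common times being the same entries whichever $N$ is used, which holds because the grid $t_k=kh$ does not depend on $N$. I would state this explicitly. No other obstacle is expected.
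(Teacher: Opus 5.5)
Your proposal is correct and follows the paper's proof essentially line by line. The shared steps are the truncation matrix ($J$ in the paper, your $E$), the cancellation $\Gamma_{H,N'}\Gamma_{H,N'}^{-1}$ combined with the nesting identity $E\bT_{N'}=\bT_N$ to obtain $\E[\xi_N\xi_{N'}]=v_{N\vee N'}$, and the same four-term expansion for the increments. The paper adds only a one-line remark that pairwise uncorrelatedness over several disjoint blocks, together with joint Gaussianity, gives mutual independence, and your argument yields this just as directly.
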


\begin{proof}
Each $\widehat\theta^{(N)}$ is a fixed linear functional of $\bZ^{(N)}$, hence of the
underlying Gaussian family; therefore $\{\xi_N\}_{N\ge1}$ is jointly Gaussian and it
suffices to compute covariances.

Fix $N_2\le N_3$ and let $J=\bigl(I_{N_2},\,0_{N_2\times(N_3-N_2)}\bigr)
\in\R^{N_2\times N_3}$, so that
\[
  \bZ^{(N_2)}=J\bZ^{(N_3)},\qquad \bT_{N_2}=J\bT_{N_3},\qquad
  \Gamma_{H,N_2}=J\Gamma_{H,N_3}J^{\top} .
\]
Writing $\bW^{(N)}=\bZ^{(N)}-\theta\bT_N$, we have
\[
  \Cov\bigl(\bW^{(N_3)},\bW^{(N_2)}\bigr)=\alpha^{2}\Gamma_{H,N_3}J^{\top},
  \qquad
  \xi_N=\frac{\bT_N^{\top}\Gamma_{H,N}^{-1}\bW^{(N)}}{d_N}.
\]
Hence
\[
  \E[\xi_{N_3}\xi_{N_2}]
  =\frac{\bT_{N_3}^{\top}\Gamma_{H,N_3}^{-1}
        \bigl(\alpha^{2}\Gamma_{H,N_3}J^{\top}\bigr)\Gamma_{H,N_2}^{-1}\bT_{N_2}}
        {d_{N_3}d_{N_2}}
  =\frac{\alpha^{2}(J\bT_{N_3})^{\top}\Gamma_{H,N_2}^{-1}\bT_{N_2}}{d_{N_3}d_{N_2}} .
\]
Since $J\bT_{N_3}=\bT_{N_2}$, the numerator equals $\alpha^{2}d_{N_2}$, so
$\E[\xi_{N_3}\xi_{N_2}]=\alpha^{2}/d_{N_3}=v_{N_3}$, which is \eqref{eq:covxi}.

Now let $N_1\le N_2\le N_3\le N_4$. Expanding and using \eqref{eq:covxi},
\[
\begin{aligned}
  \Cov\bigl(\xi_{N_4}-\xi_{N_3},\ \xi_{N_2}-\xi_{N_1}\bigr)
  &=\E[\xi_{N_4}\xi_{N_2}]-\E[\xi_{N_4}\xi_{N_1}]
    -\E[\xi_{N_3}\xi_{N_2}]+\E[\xi_{N_3}\xi_{N_1}]\\
  &=v_{N_4}-v_{N_4}-v_{N_3}+v_{N_3}=0 .
\end{aligned}
\]
The increments $\xi_{N_4}-\xi_{N_3}$ and $\xi_{N_2}-\xi_{N_1}$ are jointly Gaussian and
uncorrelated, hence independent; the same computation applied to finitely many pairwise
disjoint index blocks gives pairwise uncorrelatedness of the corresponding increments and
therefore, by joint Gaussianity, their mutual independence.
\end{proof}

\begin{corollary}\label{co:tcbm}
The variance sequence $(v_N)_{N\ge1}$ is nonincreasing with $v_N\downarrow0$, and there
exists a standard Brownian motion $\{W(v),\,v\ge0\}$ with $W(0)=0$ such that
\[
  \bigl(\widehat\theta^{(N)}-\theta\bigr)_{N\ge1}
  \overset{d}{=}\bigl(W(v_N)\bigr)_{N\ge1}
\]
as random elements of the sequence space $\R^{\N}$ equipped with its product
$\sigma$-field.
\end{corollary}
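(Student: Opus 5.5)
The plan has three steps: show $v_N$ is monotone with limit zero, construct the Brownian motion, and match finite-dimensional laws.

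\emph{Monotonicity and limit.} By \eqref{eq:covxi} with $N\le N'$,
\[
  0\le\Var(\xi_{N}-\xi_{N'})=v_N-2v_{N'}+v_{N'}=v_N-v_{N'},
\]
so $v_{N'}\le v_N$. Alternatively, $d_N$ is nondecreasing in $N$, because it is the Fisher information of a growing observation vector; but the covariance argument is self-contained. Proposition \ref{pr:varbound} gives $v_N\to0$, hence $v_N\downarrow0$. Note also $v_N>0$ by Lemma \ref{le:nondeg}.

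\emph{Construction.} Take any standard Brownian motion $W$, for instance on an auxiliary probability space; existence is classical. The claim is only an equality in law, so no coupling with the original $\bZ$ is needed. The sequence $(W(v_N))_{N\ge1}$ is centred and jointly Gaussian. Its covariance is
\[
  \E[W(v_N)W(v_{N'})]=v_N\wedge v_{N'}=v_{N\vee N'},
\]
where the last equality uses that $(v_N)$ is nonincreasing. This coincides with \eqref{eq:covxi}.

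\emph{Identification of laws.} By Theorem \ref{th:indincr}, $(\xi_N)$ is a centred jointly Gaussian sequence. Two centred Gaussian sequences with the same covariance have the same finite-dimensional distributions. Finite-dimensional cylinder sets form a $\pi$-system generating the product $\sigma$-field on $\R^{\N}$, so by Dynkin's $\pi$--$\lambda$ theorem (equivalently, Kolmogorov's uniqueness) the two laws agree on $\R^{\N}$.

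There is no real obstacle. The only point needing care is the direction of the minimum: one needs $v_N\wedge v_{N'}=v_{N\vee N'}$, which is exactly why monotonicity must be established first. That step is settled by the variance computation above.
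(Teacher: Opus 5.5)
Your proof is correct and follows the paper's argument essentially step for step. You get monotonicity from $0\le\Var(\xi_N-\xi_{N'})=v_N-v_{N'}$ via \eqref{eq:covxi}, where the paper uses consecutive indices $N,N+1$; you take $v_N\to0$ from Proposition \ref{pr:varbound}; and you identify the laws by matching the covariance $v_N\wedge v_{N'}=v_{N\vee N'}$ of two centred Gaussian sequences, with your $\pi$--$\lambda$ argument on cylinder sets simply spelling out the one-line final step of the paper.
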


\begin{proof}
Expanding $\Var(\xi_{N+1}-\xi_N)$ and using \eqref{eq:covxi}, which gives
$\E[\xi_N\xi_{N+1}]=v_{N+1}$,
\[
  0\le\Var(\xi_{N+1}-\xi_N)=v_{N+1}-2v_{N+1}+v_N=v_N-v_{N+1},
\]
so $v_{N+1}\le v_N$. By Proposition \ref{pr:varbound}, $v_N\to0$, hence
$v_N\downarrow0$.

Let $W$ be a standard Brownian motion. Both $(\xi_N)_{N\ge1}$ and
$(W(v_N))_{N\ge1}$ are centred Gaussian sequences, and since $(v_N)$ is nonincreasing,
$\Cov\bigl(W(v_N),W(v_{N'})\bigr)=v_N\wedge v_{N'}=v_{N\vee N'}$, which by
\eqref{eq:covxi} is exactly $\E[\xi_N\xi_{N'}]$. Two centred Gaussian sequences with the
same covariance have the same law on $\R^{\N}$.
\end{proof}

\begin{remark}\label{re:secondproof}
Corollary \ref{co:tcbm} yields strong consistency of $\widehat\theta$ without any moment
computation. Indeed, the set $\Lambda=\{x\in\R^{\N}:\lim_Nx_N=0\}$ belongs to the product
$\sigma$-field of $\R^{\N}$, so
$\Prob(\lim_N\xi_N=0)=\Prob(\lim_NW(v_N)=0)$. Since $v_N\downarrow0$ and $W$ is almost
surely continuous at $0$ with $W(0)=0$, the right-hand side equals $1$.
\end{remark}

\begin{remark}\label{re:indincrcomment}
Theorem \ref{th:indincr} is noteworthy because the underlying data are strongly
dependent: the increments of the estimator sequence are independent even though the
increments of the observed process are not. The mechanism is the projection structure of
the generalised least squares estimator combined with the nestedness of the observation
vectors, which makes $\widehat\theta^{(N)}-\theta$ behave exactly like a Brownian motion
sampled along its own (decreasing) variance scale.
\end{remark}

\subsection{Asymptotic normality}

\begin{theorem}\label{th:an}
As $N\to\infty$,
\begin{equation}\label{eq:an-theta}
  \sqrt{d_N}\,\bigl(\widehat\theta^{(N)}-\theta\bigr)\sim\Norm(0,\alpha^{2})
  \quad\text{for every }N,\qquad\text{hence}\qquad
  \sqrt{d_N}\,\bigl(\widehat\theta^{(N)}-\theta\bigr)\xrightarrow{\ d\ }
  \Norm(0,\alpha^{2}),
\end{equation}
and
\begin{equation}\label{eq:an-alpha}
  F_N:=\frac{1}{\alpha^{2}}\sqrt{\frac N2}
  \bigl(\widehat\alpha^{\,2}_N-\alpha^{2}\bigr)\xrightarrow{\ d\ }\Norm(0,1).
\end{equation}
Moreover, by Theorem \ref{th:exactlaw}(iii) the two statistics in \eqref{eq:an-theta} and
\eqref{eq:an-alpha} are independent for every $N$, so the convergence holds jointly with
a diagonal limiting covariance.
\end{theorem}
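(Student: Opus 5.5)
The first claim needs no limit argument. By Theorem \ref{th:exactlaw}(i), $\widehat\theta^{(N)}-\theta\sim\Norm(0,\alpha^{2}/d_N)$, so $\sqrt{d_N}(\widehat\theta^{(N)}-\theta)\sim\Norm(0,\alpha^{2})$ exactly for every $N$. A sequence with constant law converges in distribution to that law, which gives \eqref{eq:an-theta}.

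For \eqref{eq:an-alpha} I will start from Lemma \ref{le:canon} and the notation of \eqref{eq:GN}. Write $Q_N=N\widehat\alpha^{\,2}_N/\alpha^{2}\sim\chi^{2}_{N-1}$ and $G_N=Q_N-(N-1)$. Then
\[
  F_N=\sqrt{\tfrac N2}\,\frac{Q_N-N}{N}=\frac{G_N}{\sqrt{2N}}-\frac{1}{\sqrt{2N}}.
\]
The deterministic term tends to $0$. Since $\E[G_N^{2}]/(2N)=(N-1)/N\to1$, by Slutsky it suffices to prove $G_N/\sqrt{2(N-1)}\to\Norm(0,1)$.

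\textbf{Elementary route.} Because $Q_N\overset{d}{=}\sum_{i=1}^{N-1}X_i^{2}$ with $X_i$ i.i.d.\ standard normal, $G_N/\sqrt{2(N-1)}$ has the law of a normalized sum of i.i.d.\ centred variables $X_i^{2}-1$ of variance $2$. The classical CLT then gives the claim directly.

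\textbf{Malliavin route.} As in the proof of Theorem \ref{th:scalpha}, $\widetilde F_N:=G_N/\sqrt{2(N-1)}=I_2(f_N)$ with $f_N=(2(N-1))^{-1/2}\sum_{i,j}P_{ij}e_i\otimes e_j$ and $\E[\widetilde F_N^{2}]=1$. By the fourth moment theorem, or equivalently the Nualart--Ortiz-Latorre criterion \cite{NOL08} that $\|D\widetilde F_N\|_{\Hi}^{2}\to2$ in $L^{2}$, it suffices to show $\|f_N\otimes_1 f_N\|\to0$. This contraction is proportional to $\sum P_{ik}P_{kj}e_i\otimes e_j=\sum (P^{2})_{ij}e_i\otimes e_j$ divided by $2(N-1)$. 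Since $P^{2}=P$, its squared norm is $\tr(P)/(4(N-1)^{2})=1/(4(N-1))\to0$. Equivalently, I can check that the fourth cumulant $48\tr(P^{4})/(2(N-1))^{2}=12/(N-1)$ tends to $0$.

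For the joint statement, the two statistics are independent for each $N$ by Theorem \ref{th:exactlaw}(iii). The joint characteristic function therefore factorizes into the product of the marginal ones, and each factor converges. This yields joint convergence to the product law $\Norm(0,\alpha^{2})\otimes\Norm(0,1)$, whose covariance is diagonal. No step is a real obstacle; the only care needed is the Slutsky bookkeeping for the $-1/\sqrt{2N}$ shift and the $N$ versus $N-1$ normalisation.
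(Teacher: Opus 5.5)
Your proof is correct and follows the paper's own proof: exact normality for the drift, then the decomposition $F_N=\sqrt{(N-1)/N}\,G_N/\sqrt{2(N-1)}-(2N)^{-1/2}$ with the classical CLT for $N-1$ i.i.d.\ $\chi^2_1$ variables and Slutsky, and exact independence for the joint limit. Your Malliavin variant checks the equivalent contraction/fourth-cumulant condition, $\|f_N\otimes_1 f_N\|^2=1/(4(N-1))$, whereas the paper computes $\|D\widetilde G_N\|_{\Hi}^2=2\widehat\alpha^{\,2}_N/\alpha^2$ and uses its $L^2$ convergence to $2$; both checks are valid.
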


\begin{proof}
Statement \eqref{eq:an-theta} is immediate from Theorem \ref{th:exactlaw}(i): the
normalised drift estimator is exactly $\Norm(0,\alpha^{2})$ at every sample size. (The
normalisation is nondegenerate since $d_N=\alpha^{2}/v_N\to\infty$ by
Proposition \ref{pr:varbound}.)

For \eqref{eq:an-alpha}, use \eqref{eq:GN}: with $Q_N\sim\chi^{2}_{N-1}$,
\[
  F_N=\frac{1}{\sqrt{2N}}\bigl(Q_N-N\bigr)
  =\sqrt{\frac{N-1}{N}}\cdot\frac{Q_N-(N-1)}{\sqrt{2(N-1)}}-\frac{1}{\sqrt{2N}} .
\]
Since $Q_N$ is a sum of $N-1$ independent $\chi^{2}_{1}$ variables with mean $1$ and
variance $2$, the classical central limit theorem gives
$\bigl(Q_N-(N-1)\bigr)/\sqrt{2(N-1)}\xrightarrow{d}\Norm(0,1)$. As $(N-1)/N\to1$ and
$(2N)^{-1/2}\to0$, Slutsky's theorem yields \eqref{eq:an-alpha}.

Finally, joint convergence with a diagonal limit follows from the exact independence of
$\widehat\theta^{(N)}$ and $\widehat\alpha^{\,2}_N$ established in Theorem
\ref{th:exactlaw}(iii).
\end{proof}

The elementary proof above exploits the exact chi-square law of Theorem
\ref{th:exactlaw}, and in that sense the second proof below is not needed here. We record
it because it is the argument that survives the relaxation of the common-scale
assumption: with component-specific scales the exact chi-square law fails, while the
normalised scale estimator remains in the second Wiener chaos and the criterion of
Nualart and Ortiz-Latorre \cite{NOL08} still applies. We use the
notation of the proof of Theorem \ref{th:scalpha}.

\begin{lemma}\label{le:malliavin}
Let $G_N=N\widehat\alpha^{\,2}_N/\alpha^{2}-(N-1)$ and
$\widetilde G_N=G_N/\sqrt{2N}$, so that $F_N=\widetilde G_N-(2N)^{-1/2}$. Then
$\widetilde G_N$ belongs to the second Wiener chaos,
$\E[\widetilde G_N^{2}]=(N-1)/N$, and
\begin{equation}\label{eq:mall}
  \bigl\|D\widetilde G_N\bigr\|_{\Hi}^{2}=\frac{2\widehat\alpha^{\,2}_N}{\alpha^{2}} .
\end{equation}
\end{lemma}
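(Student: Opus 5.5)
We work in the representation used in the proof of Theorem \ref{th:scalpha}: $V_i=W(e_i)$ for an orthonormal family $\{e_i\}_{i\le N}\subset\Hi$, so that $\bV\sim\Norm(\bze,I_N)$. Set $f_N=\sum_{i,j}P_{ij}\,e_i\otimes e_j$, a symmetric element of $\Hi^{\odot2}$. Then $G_N=\bV^{\top}P\bV-\tr P=I_2(f_N)$, and hence $\widetilde G_N=I_2(f_N/\sqrt{2N})$ lies in the second Wiener chaos.

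The second moment comes from the isometry $\E[I_2(f)^2]=2\|f\|^2_{\Hi^{\otimes2}}$. Since $\|f_N\|^2=\sum_{i,j}P_{ij}^2=\tr(P^2)=\tr P=N-1$, we get $\E[G_N^2]=2(N-1)$, consistent with $\Var\chi^2_{N-1}$. Dividing by $2N$ gives $\E[\widetilde G_N^2]=(N-1)/N$.

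For the Malliavin derivative we use $DV_i=e_i$ and the product rule on the polynomial $G_N=\sum_{i,j}P_{ij}(V_iV_j-\delta_{ij})$:
\[
DG_N=\sum_{i,j}P_{ij}\,(V_j e_i+V_i e_j)=2\sum_{i}(P\bV)_i\,e_i ,
\]
where the second equality uses the symmetry of $P$. Orthonormality of the $e_i$ then gives
\[
\|DG_N\|_{\Hi}^2=4\|P\bV\|^2=4\,\bV^{\top}P\bV .
\]
By Lemma \ref{le:canon}, $\bV^{\top}P\bV=N\widehat\alpha^{\,2}_N/\alpha^2$. Dividing by $2N$,
\[
\|D\widetilde G_N\|^2_{\Hi}=\frac{4N\widehat\alpha^{\,2}_N}{2N\alpha^2}=\frac{2\widehat\alpha^{\,2}_N}{\alpha^2},
\]
which is \eqref{eq:mall}.

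The main obstacle is justifying the setup: we need that $\bV$ is realised as $(W(e_i))_i$ with orthonormal $e_i$ inside the isonormal process generated by the fBm's. This holds because each $V_i$ is a linear functional of the Gaussian family, so $V_i=W(g_i)$ for some $g_i\in\Hi$. Since $\Cov(\bV)=I_N$, the $g_i$ are orthonormal, and we take $e_i=g_i$. Everything else is the chain rule for smooth polynomial functionals of $W$.
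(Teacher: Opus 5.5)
Your proof is correct and follows the paper's argument essentially line by line: you use the same representation $G_N=I_2\bigl(\sum_{i,j}P_{ij}e_i\otimes e_j\bigr)$, the same derivative computation $DG_N=2\sum_i(P\bV)_ie_i$, and the same identification $\|P\bV\|^2=\bV^{\top}P\bV=N\widehat\alpha^{\,2}_N/\alpha^2$ from Lemma \ref{le:canon}. The differences are cosmetic. You obtain the second moment from the chaos isometry $\E[I_2(f)^2]=2\|f\|^2$ rather than from $\Var(\bV^{\top}P\bV)=2\tr(P^2)$, and you spell out why $V_i=W(e_i)$ with orthonormal $e_i$, which the paper takes for granted.
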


\begin{proof}
By Lemma \ref{le:canon}, $N\widehat\alpha^{\,2}_N/\alpha^{2}=\bV^{\top}P\bV$ with
$\bV\sim\Norm(\bze,I_N)$ and $P$ an orthogonal projection of rank $N-1$. Writing
$V_i=W(e_i)$ for an orthonormal family $\{e_i\}_{i\le N}\subset\Hi$,
\[
  \widetilde G_N=\frac{1}{\sqrt{2N}}\sum_{i,j=1}^{N}P_{ij}\bigl(V_iV_j-\delta_{ij}\bigr)
  =\frac{1}{\sqrt{2N}}I_2\Bigl(\sum_{i,j=1}^{N}P_{ij}\,e_i\otimes e_j\Bigr),
\]
an element of the second chaos, with
$\E[\widetilde G_N^{2}]=\Var(\bV^{\top}P\bV)/(2N)=2\tr(P^{2})/(2N)=(N-1)/N$.
Differentiating,
\[
  D\widetilde G_N=\frac{1}{\sqrt{2N}}\sum_{i,j=1}^{N}P_{ij}\bigl(V_ie_j+V_je_i\bigr)
  =\frac{2}{\sqrt{2N}}\sum_{j=1}^{N}(P\bV)_je_j,
\]
using $P=P^{\top}$. Since the $e_j$ are orthonormal,
\[
  \|D\widetilde G_N\|^{2}_{\Hi}=\frac{4}{2N}\|P\bV\|^{2}
  =\frac2N\bV^{\top}P\bV=\frac{2\widehat\alpha^{\,2}_N}{\alpha^{2}} . \qedhere
\]
\end{proof}

\begin{proof}[Second proof of \eqref{eq:an-alpha}]
By Lemma \ref{le:malliavin}, $\widetilde G_N$ lies in the second Wiener chaos with
$\E[\widetilde G_N^{2}]\to1$. Moreover, by Corollary \ref{co:moments},
$\E[(\widehat\alpha^{\,2}_N-\alpha^{2})^{2}]=(2N-1)\alpha^{4}/N^{2}\to0$, that is
$\widehat\alpha^{\,2}_N\to\alpha^{2}$ in $L^{2}$, so \eqref{eq:mall} gives
\[
  \bigl\|D\widetilde G_N\bigr\|^{2}_{\Hi}
  =\frac{2\widehat\alpha^{\,2}_N}{\alpha^{2}}\longrightarrow2
  \qquad\text{in }L^{2}(\Omega).
\]
By the criterion of Nualart and Ortiz-Latorre \cite[Thm.~4]{NOL08} for a sequence in the
$p$-th chaos with $p=2$ and limiting variance $\sigma^{2}=1$ (convergence of
$\|D\widetilde G_N\|^{2}_{\Hi}$ in $L^{2}$ to $p\sigma^{2}=2$), we conclude
$\widetilde G_N\xrightarrow{d}\Norm(0,1)$, and hence
$F_N=\widetilde G_N-(2N)^{-1/2}\xrightarrow{d}\Norm(0,1)$.
\end{proof}

\begin{remark}\label{re:special}
By Remark \ref{re:half}, the results of Sections \ref{se:mle} and \ref{se:asymp}
remain valid for arbitrary $H_r\in(0,1)$. In particular the framework contains the
following models as special cases:
\begin{enumerate}[label=\textup{(\alph*)}]
\item $m=1$, $H_1=\frac12$: Brownian motion with drift;
\item $m=1$, $H_1\in(\frac12,1)$: the drifted fractional Brownian motion model of
      Hu et al.\ \cite{HNXZ11};
\item $m=2$, $H_1=\frac12$, $H_2\in(\frac12,1)$: the mixed fractional Brownian motion
      model of Xiao et al.\ \cite{XZZ11};
\item $m=2$, $H_1,H_2\in(\frac12,1)$, $H_1\ne H_2$: the double-fractional model of
      Mishura and Voronov \cite{MV15}.
\end{enumerate}
\end{remark}

\section{Simulation study}\label{se:sim}

\subsection{Design}
Sample paths are generated directly from the covariance matrix $\alpha^{2}\GH$ of the
model by Cholesky factorisation, which is exact: if $\GH=LL^{\top}$ with $L$ lower
triangular, then $\bZ=\theta\bT+\alpha L\varepsilon$ with
$\varepsilon\sim\Norm(\bze,I_N)$ has the exact law \eqref{eq:Zlaw}; see, e.g.,
\cite[Ch.~XI]{AG07} and \cite{Hig01}. The same factorisation is reused to evaluate
$\GH^{-1}$ in \eqref{eq:mle}, so that the estimators are computed without forming the
inverse explicitly. Throughout,
\[
  H_1=0.65,\qquad H_2=0.75,\qquad H_3=0.85\qquad(m=3),
\]
the sample size $N$ takes the values $\{30,100,200,300,500\}$, and the two sampling
frequencies $h=1/252$ (daily) and $h=1/12$ (monthly) are considered. Three parameter
scenarios are used, $(\theta,\alpha)\in\{(0.5,0.4),(1.5,1),(3,2)\}$.

Section \ref{sse:point} assesses point estimation and uses $500$ replications per
configuration, as is customary for the estimation of means and mean squared errors.
Section \ref{sse:exact} assesses the exact distributional theory (coverage
probabilities, goodness of fit) and uses $20\,000$ replications per configuration,
since discriminating between an exact and an approximately exact sampling distribution
requires substantially greater Monte Carlo precision. With $20\,000$ replications the
standard error of an estimated $0.95$ coverage probability is $0.0015$.

\begin{figure}[htbp]
\centering
\includegraphics[width=\textwidth]{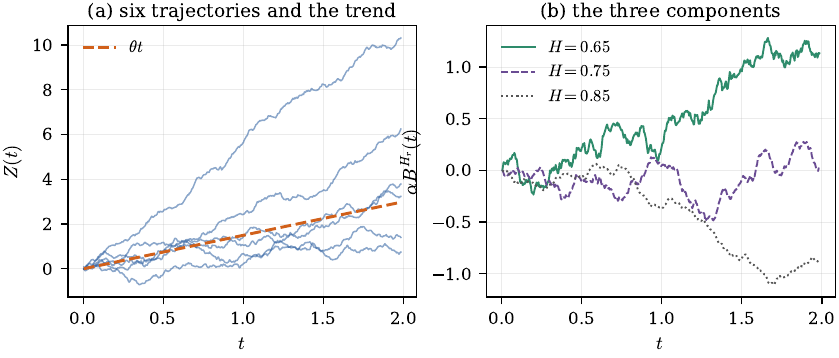}
\caption{The multi-mixed fractional Brownian motion model \eqref{eq:model} for
Scenario 2 ($\theta=1.5$, $\alpha=1$) with $N=500$ and $h=1/252$, so that the observation
window is $t_N\approx1.98$. (a) Six independent trajectories and the trend $\theta t$.
(b) The three fractional components $\alpha B^{H_r}$ of one trajectory.}
\label{fi:paths}
\end{figure}

\subsection{Point estimation}\label{sse:point}
Tables \ref{ta:daily} and \ref{ta:monthly} report, for each configuration, the Monte
Carlo mean, bias, standard deviation and mean squared error, together with the
\emph{exact} standard deviation predicted by the theory. For ease of comparison with the
true value $\alpha$, the tables report the scale estimator on the natural scale,
$\widehat\alpha=\sqrt{\widehat\alpha^{\,2}}$; the exact entries are then
$\alpha/\sqrt{d_N}$ for $\widehat\theta$, and
$\bigl(\alpha^{2}(N-1)/N-b_N^{2}\alpha^{2}\bigr)^{1/2}$ for $\widehat\alpha$, with $b_N$
as in \eqref{eq:bN}.

The empirical means of $\widehat\theta$ and $\widehat\alpha$ remain close to the true
values in every scenario, and the standard deviations and mean squared errors decrease
monotonically in $N$, as predicted by Proposition \ref{pr:varbound} and
Corollary \ref{co:moments}. Beyond that, four features deserve comment, each of them a
direct check on the theory.

First, every simulated standard deviation agrees with the exact value to within Monte
Carlo error; with $500$ replications the relative standard error of an estimated standard
deviation is $(2\times499)^{-1/2}=3.2\%$, and across the sixty entries of
Tables \ref{ta:daily} and \ref{ta:monthly} the largest discrepancy is $4.8\%$, that is
$1.5$ Monte Carlo standard errors. The exact finite-sample theory is therefore not merely
qualitatively but quantitatively correct already at $N=30$.

Second, the scale estimator is accurate even at $N=30$, and its dispersion is
\emph{identical} under the two sampling frequencies: the exact standard deviation of
$\widehat\alpha$ is the same number in Table \ref{ta:daily} and in
Table \ref{ta:monthly}, row by row, and the simulated values agree to within Monte Carlo
error; for instance, in Scenario 2 with $N=500$ the simulated standard deviation is
$0.0323$ under both meshes, against the exact value $0.0316$. This is exactly what
Theorem \ref{th:exactlaw}(ii) predicts, since the law of
$\widehat\alpha^{\,2}/\alpha^{2}$ does not depend on $h$ (Remark \ref{re:pivot}).
Likewise the exact mean $\E[\widehat\alpha]=b_N\alpha$ equals $0.3899$ for $\alpha=0.4$,
$N=30$, against the simulated $0.3888$.

Third, the drift estimator is markedly more variable, and its dispersion scales linearly
in $\alpha$, as \eqref{eq:varbound} requires: in Table \ref{ta:daily} at $N=500$ the
standard deviations $0.5920$, $1.4470$ and $2.9366$ across the three scenarios are in the
ratios $1:2.44:4.96$, against the theoretical $\alpha$-ratios $1:2.5:5$.

Fourth, the comparison of the two sampling frequencies isolates the role of the
observation window. Under $h=1/12$ the standard deviation of $\widehat\theta$ is close to
half its value under $h=1/252$ at every sample size, whereas $\widehat\alpha$ is
unaffected. This is the content of Remark \ref{re:window}: the accuracy of drift
estimation is governed by the window length $t_N=Nh$, which is $21$ times longer under
monthly sampling. The exact ratio $\sqrt{d_N(1/252)/d_N(1/12)}$ implied by the theory
rises from $0.456$ at $N=30$ to $0.509$ at $N=500$, in close agreement with the simulated
ratios, which range from $0.44$ to $0.52$. It is worth noting that this ratio has not yet
reached its asymptotic value $21^{\Hmax-1}=0.633$: as explained in Remark \ref{re:sharp},
the sub-dominant components decay only polynomially and the effective exponent at these
sample sizes is still strictly between $\Hmin-1$ and $\Hmax-1$.

\begin{table}[htbp]
\caption{Monte Carlo results for the multi-mixed fBm model, $h=1/252$; $500$
replications, $m=3$, $(H_1,H_2,H_3)=(0.65,0.75,0.85)$. ``Exact s.d.'' is the standard
deviation predicted by Theorem \ref{th:exactlaw} and Corollary \ref{co:moments}.}
\label{ta:daily}
\small
\begin{tabular}{@{}llrrrrr@{}}
\toprule
$N$ & Parameter & Mean & Bias & Std.\ dev. & Exact s.d. & MSE\\
\midrule
\multicolumn{7}{@{}l}{\itshape Scenario 1: $\theta=0.5$, $\alpha=0.4$}\\
30  & $\widehat\theta$  & 0.5420 & \phantom{$-$}0.0420 & 1.1631 & 1.2086 & 1.3533\\
    & $\widehat\alpha$  & 0.3888 & $-$0.0112 & 0.0511 & 0.0514 & 0.0027\\
100 & $\widehat\theta$  & 0.4843 & $-$0.0157 & 0.8563 & 0.8727 & 0.7328\\
    & $\widehat\alpha$  & 0.3965 & $-$0.0035 & 0.0285 & 0.0282 & 0.0008\\
200 & $\widehat\theta$  & 0.5030 & \phantom{$-$}0.0030 & 0.7114 & 0.7298 & 0.5056\\
    & $\widehat\alpha$  & 0.3989 & $-$0.0011 & 0.0198 & 0.0200 & 0.0004\\
300 & $\widehat\theta$  & 0.5163 & \phantom{$-$}0.0163 & 0.6464 & 0.6593 & 0.4176\\
    & $\widehat\alpha$  & 0.3995 & $-$0.0005 & 0.0161 & 0.0163 & 0.0003\\
500 & $\widehat\theta$  & 0.4991 & $-$0.0009 & 0.5920 & 0.5819 & 0.3501\\
    & $\widehat\alpha$  & 0.3994 & $-$0.0006 & 0.0130 & 0.0126 & 0.0002\\
\addlinespace
\multicolumn{7}{@{}l}{\itshape Scenario 2: $\theta=1.5$, $\alpha=1.0$}\\
30  & $\widehat\theta$  & 1.5614 & \phantom{$-$}0.0614 & 3.1215 & 3.0215 & 9.7381\\
    & $\widehat\alpha$  & 0.9728 & $-$0.0272 & 0.1288 & 0.1285 & 0.0173\\
100 & $\widehat\theta$  & 1.5406 & \phantom{$-$}0.0406 & 2.0798 & 2.1818 & 4.3229\\
    & $\widehat\alpha$  & 0.9945 & $-$0.0055 & 0.0698 & 0.0706 & 0.0049\\
200 & $\widehat\theta$  & 1.3670 & $-$0.1330 & 1.7955 & 1.8245 & 3.2384\\
    & $\widehat\alpha$  & 0.9983 & $-$0.0017 & 0.0524 & 0.0500 & 0.0027\\
300 & $\widehat\theta$  & 1.4937 & $-$0.0063 & 1.6793 & 1.6482 & 2.8171\\
    & $\widehat\alpha$  & 0.9968 & $-$0.0032 & 0.0416 & 0.0408 & 0.0017\\
500 & $\widehat\theta$  & 1.4834 & $-$0.0166 & 1.4470 & 1.4547 & 2.0919\\
    & $\widehat\alpha$  & 0.9994 & $-$0.0006 & 0.0323 & 0.0316 & 0.0010\\
\addlinespace
\multicolumn{7}{@{}l}{\itshape Scenario 3: $\theta=3.0$, $\alpha=2.0$}\\
30  & $\widehat\theta$  & 3.1144 & \phantom{$-$}0.1144 & 5.9432 & 6.0431 & 35.3000\\
    & $\widehat\alpha$  & 1.9587 & $-$0.0413 & 0.2651 & 0.2571 & 0.0719\\
100 & $\widehat\theta$  & 3.2531 & \phantom{$-$}0.2531 & 4.4664 & 4.3636 & 19.9930\\
    & $\widehat\alpha$  & 1.9884 & $-$0.0116 & 0.1414 & 0.1412 & 0.0201\\
200 & $\widehat\theta$  & 3.0443 & \phantom{$-$}0.0443 & 3.6612 & 3.6491 & 13.3930\\
    & $\widehat\alpha$  & 1.9927 & $-$0.0073 & 0.1001 & 0.0999 & 0.0101\\
300 & $\widehat\theta$  & 2.9960 & $-$0.0040 & 3.2454 & 3.2964 & 10.5220\\
    & $\widehat\alpha$  & 1.9987 & $-$0.0013 & 0.0843 & 0.0816 & 0.0071\\
500 & $\widehat\theta$  & 2.9867 & $-$0.0133 & 2.9366 & 2.9093 & 8.6152\\
    & $\widehat\alpha$  & 1.9990 & $-$0.0010 & 0.0627 & 0.0632 & 0.0039\\
\bottomrule
\end{tabular}
\end{table}

\begin{table}[htbp]
\caption{Monte Carlo results for the multi-mixed fBm model, $h=1/12$; $500$
replications, $m=3$, $(H_1,H_2,H_3)=(0.65,0.75,0.85)$. The ``Exact s.d.'' entries for
$\widehat\alpha$ coincide with those of Table \ref{ta:daily}, as they must by
Theorem \ref{th:exactlaw}(ii).}
\label{ta:monthly}
\small
\begin{tabular}{@{}llrrrrr@{}}
\toprule
$N$ & Parameter & Mean & Bias & Std.\ dev. & Exact s.d. & MSE\\
\midrule
\multicolumn{7}{@{}l}{\itshape Scenario 1: $\theta=0.5$, $\alpha=0.4$}\\
30  & $\widehat\theta$  & 0.5038 & \phantom{$-$}0.0038 & 0.5610 & 0.5507 & 0.3144\\
    & $\widehat\alpha$  & 0.3901 & $-$0.0099 & 0.0520 & 0.0514 & 0.0028\\
100 & $\widehat\theta$  & 0.5120 & \phantom{$-$}0.0120 & 0.4178 & 0.4172 & 0.1745\\
    & $\widehat\alpha$  & 0.3955 & $-$0.0045 & 0.0284 & 0.0282 & 0.0008\\
200 & $\widehat\theta$  & 0.4990 & $-$0.0010 & 0.3656 & 0.3585 & 0.1336\\
    & $\widehat\alpha$  & 0.3982 & $-$0.0018 & 0.0207 & 0.0200 & 0.0004\\
300 & $\widehat\theta$  & 0.5154 & \phantom{$-$}0.0154 & 0.3264 & 0.3289 & 0.1067\\
    & $\widehat\alpha$  & 0.3986 & $-$0.0014 & 0.0160 & 0.0163 & 0.0003\\
500 & $\widehat\theta$  & 0.5137 & \phantom{$-$}0.0137 & 0.2904 & 0.2959 & 0.0844\\
    & $\widehat\alpha$  & 0.3999 & $-$0.0001 & 0.0125 & 0.0126 & 0.0002\\
\addlinespace
\multicolumn{7}{@{}l}{\itshape Scenario 2: $\theta=1.5$, $\alpha=1.0$}\\
30  & $\widehat\theta$  & 1.5184 & \phantom{$-$}0.0184 & 1.3724 & 1.3769 & 1.8820\\
    & $\widehat\alpha$  & 0.9778 & $-$0.0222 & 0.1328 & 0.1285 & 0.0181\\
100 & $\widehat\theta$  & 1.5246 & \phantom{$-$}0.0246 & 1.0378 & 1.0430 & 1.0765\\
    & $\widehat\alpha$  & 0.9916 & $-$0.0084 & 0.0725 & 0.0706 & 0.0053\\
200 & $\widehat\theta$  & 1.4817 & $-$0.0183 & 0.9138 & 0.8963 & 0.8345\\
    & $\widehat\alpha$  & 0.9969 & $-$0.0031 & 0.0505 & 0.0500 & 0.0026\\
300 & $\widehat\theta$  & 1.4945 & $-$0.0055 & 0.8237 & 0.8223 & 0.6778\\
    & $\widehat\alpha$  & 0.9978 & $-$0.0022 & 0.0396 & 0.0408 & 0.0016\\
500 & $\widehat\theta$  & 1.5111 & \phantom{$-$}0.0111 & 0.7477 & 0.7397 & 0.5585\\
    & $\widehat\alpha$  & 0.9984 & $-$0.0016 & 0.0323 & 0.0316 & 0.0010\\
\addlinespace
\multicolumn{7}{@{}l}{\itshape Scenario 3: $\theta=3.0$, $\alpha=2.0$}\\
30  & $\widehat\theta$  & 3.1339 & \phantom{$-$}0.1339 & 2.7766 & 2.7537 & 7.7196\\
    & $\widehat\alpha$  & 1.9432 & $-$0.0568 & 0.2588 & 0.2571 & 0.0701\\
100 & $\widehat\theta$  & 3.0232 & \phantom{$-$}0.0232 & 2.1174 & 2.0861 & 4.4793\\
    & $\widehat\alpha$  & 1.9872 & $-$0.0128 & 0.1405 & 0.1412 & 0.0199\\
200 & $\widehat\theta$  & 2.9658 & $-$0.0342 & 1.7160 & 1.7926 & 2.9430\\
    & $\widehat\alpha$  & 1.9955 & $-$0.0045 & 0.0993 & 0.0999 & 0.0099\\
300 & $\widehat\theta$  & 3.0484 & \phantom{$-$}0.0484 & 1.6349 & 1.6447 & 2.6724\\
    & $\widehat\alpha$  & 1.9933 & $-$0.0067 & 0.0821 & 0.0816 & 0.0068\\
500 & $\widehat\theta$  & 3.0122 & \phantom{$-$}0.0122 & 1.4436 & 1.4793 & 2.0820\\
    & $\widehat\alpha$  & 1.9990 & $-$0.0010 & 0.0612 & 0.0632 & 0.0037\\
\bottomrule
\end{tabular}
\end{table}

\subsection{Exactness of the finite-sample theory}\label{sse:exact}

Theorem \ref{th:exactlaw} and Corollary \ref{co:ci} make claims of a different nature
from those assessed above: they assert that the confidence intervals
\eqref{eq:ci-alpha} and \eqref{eq:ci-theta} have coverage exactly $1-\beta$ at every
sample size, and that the two estimators are exactly independent. Table \ref{ta:coverage}
reports the corresponding Monte Carlo check, based on $20\,000$ replications of
Scenario 2. By Remark \ref{re:pivot} the coverage of \eqref{eq:ci-alpha} does not depend
on $\theta$, $\alpha$, $h$ or $H$ at all, and the coverage of \eqref{eq:ci-theta} does
not depend on them either, so a single scenario suffices; the two meshes are retained as
a control.

Every entry of the two coverage columns lies within three Monte Carlo standard errors of
the nominal $0.95$, with no systematic drift in $N$ and, in particular, no deterioration
at $N=30$. The Kolmogorov--Smirnov tests of the two pivotal statistics against their
exact reference laws ($t_{N-1}$ for
$\sqrt{d_N}(\widehat\theta-\theta)/\widetilde\alpha$ and $\chi^{2}_{N-1}$ for
$N\widehat\alpha^{\,2}/\alpha^{2}$) produce no evidence against the theory: of the
twenty $p$-values reported, one falls below $0.05$, which is what one expects by chance.
The empirical correlation between $\widehat\theta$ and $\widehat\alpha^{\,2}$ never
exceeds $0.010$ in absolute value, against a Monte Carlo standard error of
$1/\sqrt{20\,000}=0.007$, confirming Theorem \ref{th:exactlaw}(iii).

\begin{figure}[htbp]
\centering
\includegraphics[width=\textwidth]{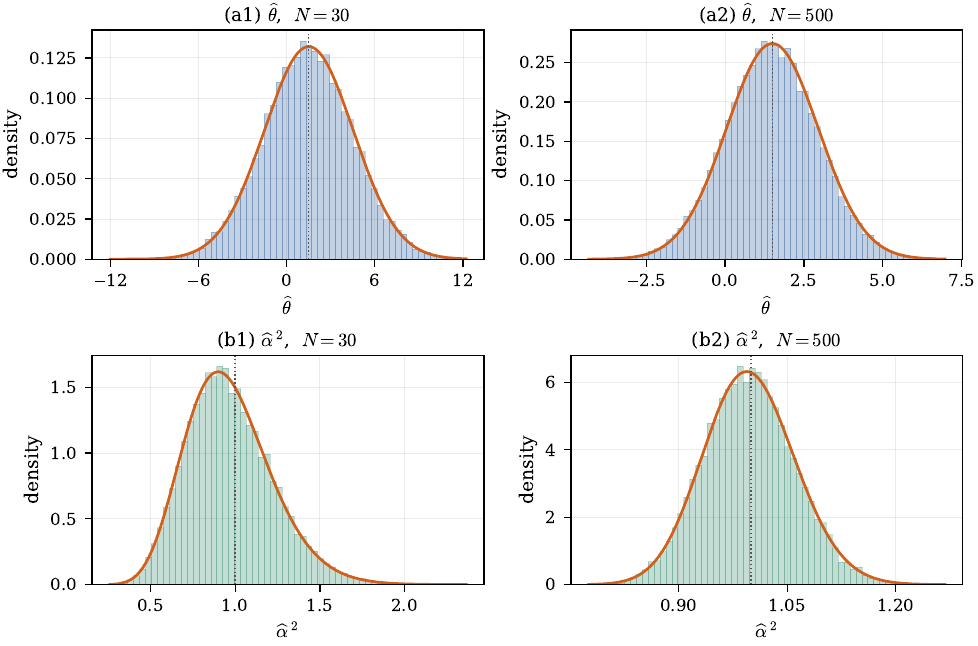}
\caption{Histograms of $\widehat\theta$ (top) and $\widehat\alpha^{\,2}$ (bottom) over
$20\,000$ replications of Scenario 2 with $h=1/252$, at $N=30$ (left) and $N=500$
(right), with the exact densities of Theorem \ref{th:exactlaw} superimposed: the
$\Norm(\theta,\alpha^{2}/d_N)$ density and the density of
$(\alpha^{2}/N)\chi^{2}_{N-1}$. Dotted vertical lines mark the true parameter values.}
\label{fi:hist}
\end{figure}

\begin{figure}[htbp]
\centering
\includegraphics[width=\textwidth]{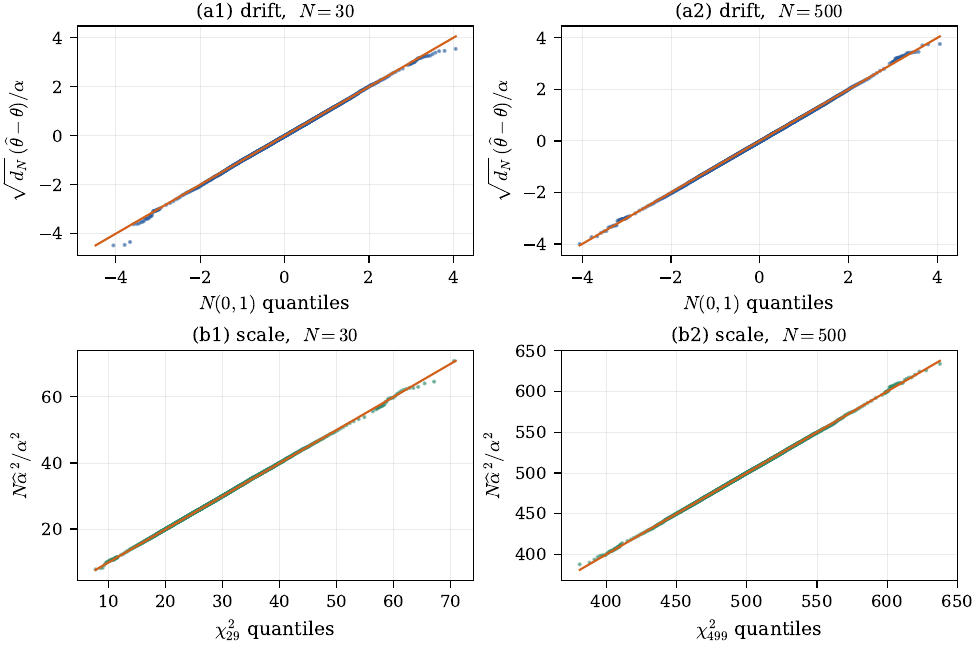}
\caption{Quantile--quantile plots against the exact reference laws of
Theorem \ref{th:exactlaw}, for the same $20\,000$ replications as in
Figure \ref{fi:hist}: $\sqrt{d_N}(\widehat\theta-\theta)/\alpha$ against $\Norm(0,1)$
(top) and $N\widehat\alpha^{\,2}/\alpha^{2}$ against $\chi^{2}_{N-1}$ (bottom), at
$N=30$ (left) and $N=500$ (right).}
\label{fi:qq}
\end{figure}

\begin{table}[htbp]
\caption{Exactness of the finite-sample theory: empirical coverage of the nominal $95\%$
intervals \eqref{eq:ci-theta} and \eqref{eq:ci-alpha}, Kolmogorov--Smirnov $p$-values of
the two pivotal statistics against their exact reference laws, and empirical correlation
$\widehat\rho$ between $\widehat\theta$ and $\widehat\alpha^{\,2}$. Scenario 2
($\theta=1.5$, $\alpha=1$), $m=3$,
$(H_1,H_2,H_3)=(0.65,0.75,0.85)$, $20\,000$ replications; the Monte Carlo standard error
of a coverage entry is $0.0015$.}
\label{ta:coverage}
\small
\begin{tabular}{@{}llccccr@{}}
\toprule
 & & \multicolumn{2}{c}{Coverage of the $95\%$ interval}
   & \multicolumn{2}{c}{KS $p$-value} & \\
\cmidrule(lr){3-4}\cmidrule(lr){5-6}
$h$ & $N$ & for $\theta$ & for $\alpha^{2}$ & $t_{N-1}$ & $\chi^{2}_{N-1}$
    & $\widehat\rho$\\
\midrule
$1/252$ & 30  & 0.9522 & 0.9513 & 0.719 & 0.024 & \phantom{$-$}0.0012\\
        & 100 & 0.9539 & 0.9488 & 0.807 & 0.379 & $-$0.0073\\
        & 200 & 0.9495 & 0.9508 & 0.212 & 0.181 & $-$0.0013\\
        & 300 & 0.9515 & 0.9479 & 0.807 & 0.544 & \phantom{$-$}0.0009\\
        & 500 & 0.9492 & 0.9486 & 0.991 & 0.777 & $-$0.0075\\
\addlinespace
$1/12$  & 30  & 0.9495 & 0.9489 & 0.764 & 0.558 & $-$0.0080\\
        & 100 & 0.9493 & 0.9496 & 0.087 & 0.548 & \phantom{$-$}0.0098\\
        & 200 & 0.9478 & 0.9523 & 0.136 & 0.757 & $-$0.0052\\
        & 300 & 0.9475 & 0.9497 & 0.146 & 0.633 & \phantom{$-$}0.0005\\
        & 500 & 0.9478 & 0.9498 & 0.394 & 0.097 & \phantom{$-$}0.0019\\
\bottomrule
\end{tabular}
\end{table}

\subsection{Graphical analysis}

Figure \ref{fi:paths} illustrates the model. Panel (a) shows six independent trajectories
of \eqref{eq:model} for Scenario 2 with $N=500$ and $h=1/252$, together with the trend
$\theta t$; the dispersion of the paths around the trend over a window of length
$t_N\approx1.98$ is a graphical expression of the variance bound \eqref{eq:varbound} and
explains why the drift is the hard parameter in this model. Panel (b) displays the three
fractional components of one such trajectory and shows how the roughness decreases with
$H_r$.

Figures \ref{fi:hist} and \ref{fi:qq} assess Theorem \ref{th:exactlaw} graphically. In
Figure \ref{fi:hist} the histograms of $\widehat\theta$ and $\widehat\alpha^{\,2}$ are
superimposed not on a fitted Gaussian density but on the \emph{exact} densities predicted
by the theorem, namely $\Norm(\theta,\alpha^{2}/d_N)$ and the density of
$(\alpha^{2}/N)\chi^{2}_{N-1}$. The agreement is complete at $N=500$ and equally complete
at $N=30$, where the pronounced right skewness of $\widehat\alpha^{\,2}$, which would
appear as a failure of normality, is precisely the skewness of the $\chi^{2}_{29}$
law. Figure \ref{fi:qq} makes the same point through quantile--quantile plots against the
exact reference laws: the four plots are straight lines, at $N=30$ as much as at $N=500$.

\subsection{Sensitivity to the Hurst vector}\label{sse:misspec}

Every exactness statement of Section \ref{se:mle} is conditional on $\GH$ being the true
covariance, that is, on the Hurst vector being known. In applications $H$ is postulated
or estimated, and it is legitimate to ask what survives when the matrix used for
inference is built from a wrong vector. We therefore generate data from
$H=(0.65,0.75,0.85)$ and compute the estimators and the intervals from
$H+\delta\bone$, with $\delta$ ranging over $\pm0.02$, $\pm0.05$ and $\pm0.10$;
Scenario 2, $h=1/252$, $N\in\{100,500\}$ and $20\,000$ replications throughout.
Table \ref{ta:misspec} reports the outcome.

The whole effect is carried by two scalars. Write $\Gamma_{\!a}$ for the assumed
covariance and $\GH$ for the true one, and set
\begin{equation}\label{eq:bq}
b_N(\delta)=\frac1N\tr\bigl(\Gamma_{\!a}^{-1}\GH\bigr),\qquad
q_N(\delta)=\frac{\bT^{\top}\Gamma_{\!a}^{-1}\GH\Gamma_{\!a}^{-1}\bT}
                 {\bT^{\top}\Gamma_{\!a}^{-1}\bT}.
\end{equation}
Both equal one when $\delta=0$. A direct computation gives
$\E[\widehat\alpha^{\,2}]=\alpha^{2}\{Nb_N(\delta)-q_N(\delta)\}/N$, and replacing the
quadratic form in the denominator of the studentised statistic by its mean yields the
approximation
\begin{equation}\label{eq:covapprox}
\text{coverage of \eqref{eq:ci-theta}}\;\approx\;
2F_{N-1}\Bigl(t_{N-1,\,1-\beta/2}\sqrt{b_N(\delta)/q_N(\delta)}\Bigr)-1,
\end{equation}
$F_{N-1}$ denoting the $t_{N-1}$ distribution function. Crude as it is,
\eqref{eq:covapprox} reproduces every entry of the simulated coverage column of
Table \ref{ta:misspec} to within $1.6$ Monte Carlo standard errors.

The two parameters again behave in opposite ways. For the scale, $b_N(\delta)$ is
essentially free of $N$: it equals $0.781$ at $N=100$ and $0.780$ at $N=500$ for
$\delta=-0.02$, and the simulated ratio $\E[\widehat\alpha^{\,2}]/\alpha^{2}$ follows it
exactly. The interval \eqref{eq:ci-alpha}, on the other hand, has width of order
$N^{-1/2}$, so a bias that does not vanish is eventually excluded from it and the
coverage collapses: already at $\delta=\pm0.02$ it has fallen to $0.62$ and $0.53$ at
$N=100$, and to about $0.02$ at $N=500$. The reason is identifiability rather than
numerical instability. Since $c(k)$ aggregates terms $h^{2H_r}$, shifting every $H_r$ by
$\delta$ multiplies the increment covariance by approximately $h^{2\delta}$, so that
$b_N(\delta)\approx h^{-2\delta}$, which gives $0.80$ and $1.25$ at $\delta=\mp0.02$
against the computed $0.78$ and $1.29$. A perturbation of the Hurst vector is in this
sense almost indistinguishable from a rescaling of $\alpha^{2}$: $\widehat\alpha^{\,2}$ should be read as an estimate of
$\alpha^{2}b_N(\delta)$, that is, of the scale relative to the postulated $H$.

For the drift the picture is far better. The degradation is gradual and, more usefully,
signed: the interval is too short when $b_N(\delta)<1$ and too long when
$b_N(\delta)>1$, so on a fine grid, where $h<1$ and $b_N(\delta)\approx h^{-2\delta}$,
understating the Hurst parameters shortens it and overstating them lengthens it. At
$\delta=-0.02$ the coverage of the nominal $95\%$ interval is $0.924$ at $N=100$ and
$0.913$ at $N=500$; at $\delta=+0.02$ it is $0.970$ and $0.975$. An error of five
hundredths, which is the order of the standard error of a Hurst estimator at these sample
sizes, still leaves $0.84$ on the pessimistic side and $0.99$ on the optimistic one. A
practical rule follows: when $H$ is uncertain, erring towards a larger $b_N(\delta)$,
which for $h<1$ means rounding the Hurst estimates upwards, yields a conservative
interval for $\theta$, at a cost in length that Table \ref{ta:misspec} quantifies.

The conclusion is that exactness is a statement made conditionally on $H$, and that the
two parameters inherit that conditioning very differently. Inference about the drift is
robust to small errors in the Hurst vector and errs in a controllable direction;
inference about the scale is not, because $\alpha^{2}$ and $H$ are nearly confounded. A
joint treatment of $(\theta,\alpha^{2},H)$, in which this confounding would be resolved
rather than assumed away, is the natural continuation of the present work.

\begin{table}[htbp]
\caption{Sensitivity to a misspecified Hurst vector. Data are generated with
$H=(0.65,0.75,0.85)$; the estimators and the nominal $95\%$ intervals are computed with
$H+\delta\bone$. Scenario 2 ($\theta=1.5$, $\alpha=1$), $h=1/252$, $20\,000$
replications; the Monte Carlo standard error of a coverage entry is at most $0.0033$.
The column ``approx.'' is the right-hand side of \eqref{eq:covapprox}.}
\label{ta:misspec}
\small
\begin{tabular}{@{}rrcccc@{}}
\toprule
 & & \multicolumn{2}{c}{Coverage for $\theta$}
   & Coverage & \\
\cmidrule(lr){3-4}
$N$ & $\delta$ & simulated & approx. & for $\alpha^{2}$
    & $\E[\widehat\alpha^{\,2}]/\alpha^{2}$\\
\midrule
100 & $-0.10$ & 0.7664 & 0.7710 & 0.0000 & 0.3004\\
    & $-0.05$ & 0.8703 & 0.8723 & 0.0095 & 0.5368\\
    & $-0.02$ & 0.9237 & 0.9230 & 0.6197 & 0.7724\\
    & $\phantom{-}0.00$ & 0.9499 & 0.9500 & 0.9487 & 0.9913\\
    & $+0.02$ & 0.9701 & 0.9706 & 0.5259 & 1.2800\\
    & $+0.05$ & 0.9898 & 0.9897 & 0.0057 & 1.9019\\
    & $+0.10$ & 0.9996 & 0.9994 & 0.0000 & 3.8306\\
\addlinespace
500 & $-0.10$ & 0.6908 & 0.6911 & 0.0000 & 0.3050\\
    & $-0.05$ & 0.8407 & 0.8385 & 0.0000 & 0.5422\\
    & $-0.02$ & 0.9131 & 0.9126 & 0.0241 & 0.7784\\
    & $\phantom{-}0.00$ & 0.9487 & 0.9500 & 0.9518 & 0.9980\\
    & $+0.02$ & 0.9750 & 0.9758 & 0.0200 & 1.2875\\
    & $+0.05$ & 0.9940 & 0.9948 & 0.0000 & 1.9113\\
    & $+0.10$ & 1.0000 & 1.0000 & 0.0000 & 3.8458\\
\bottomrule
\end{tabular}
\end{table}

\section{Discussion and concluding remarks}\label{se:disc}

For a linear drift observed together with a superposition of independent fractional
Brownian motions sharing a common scale, likelihood inference is exact. The maximum
likelihood estimators of the drift and of the scale are available in closed form; their
joint law is known at every sample size, with $\widehat\theta$ Gaussian,
$N\widehat\alpha^{\,2}/\alpha^{2}$ chi-square with $N-1$ degrees of freedom and the two
independent; and confidence intervals and tests of exact level follow, together with
complete sufficiency, minimum variance unbiasedness and attainment of the Cram\'er--Rao
bound. The asymptotic theory, mean square and strong consistency and asymptotic
normality, is then a consequence rather than a substitute.

Several features of the analysis deserve emphasis. First, the model extends the classical
drifted fractional Brownian motion framework to an arbitrary finite number of independent
components, which enlarges the range of accessible covariance structures while preserving
Gaussianity and hence tractability of the likelihood; the single-component, mixed and
double-fractional models arise as special cases (Remark \ref{re:special}).

Second, the entire multi-mixed structure is compressed into the single scalar
$d_N=\bT^{\top}\GH^{-1}\bT$, which by Proposition \ref{pr:increment} is
$h^{2}\bone^{\top}C_N^{-1}\bone$ for the Toeplitz increment covariance $C_N$. Once the
noise covariance is known up to a positive factor, whitening reduces the model to a
Gaussian linear model with one regressor (Remark \ref{re:glm}), and the number of
components, the Hurst vector and the sampling step are all absorbed into $d_N$; this is
what delivers exact distributions and exact optimality for arbitrary $m$ and arbitrary
Hurst vectors. The same observation delimits the theory: with component-specific scales
the covariance is no longer a known matrix times an unknown constant, and exactness is
lost.

Third, the two parameters behave very differently. Inference about the scale is free of
the model: after whitening by $\GH^{-1/2}$ the entire multi-scale dependence structure
disappears from the sampling distribution of $\widehat\alpha^{\,2}$, which depends on $N$
alone, so the chi-square interval has exact level whatever the dependence structure.
Inference about the drift is not free. The explicit bound \eqref{eq:varbound} shows that
$\Var(\widehat\theta)$ is controlled by $\sum_rt_N^{2H_r-2}$, so that the component with
the largest Hurst parameter dominates, and by Remark \ref{re:sharp} the resulting order
$(Nh)^{2\Hmax-2}$ is exact and not merely an upper bound. What can be learned about
$\theta$ is therefore decided by the length of the observation window and by the
strongest long-range dependence present in the superposition; refining the mesh on a
fixed window does not help. This is the precise sense in which stronger long-range
dependence reduces the information available about the drift, and it is clearly visible
in the simulations.

Fourth, the sequence of drift estimators indexed by the sample size turns out to be, in
law, a Brownian motion run along its own decreasing variance scale
(Corollary \ref{co:tcbm}). This is a somewhat unexpected structure in a strongly
dependent data setting and it gives a transparent, moment-free proof of strong
consistency.

Fifth, all of this holds given the Hurst vector, and Section \ref{sse:misspec} measures
what a wrong one costs. The asymmetry between the two parameters appears there from the
other side: the Student interval for $\theta$ tolerates a small error in $H$ and loses
level in a direction one can predict, whereas the chi-square interval for $\alpha^{2}$
does not, since shifting every $H_r$ by $\delta$ multiplies the increment covariance by
approximately $h^{2\delta}$ and is therefore almost indistinguishable from a rescaling of
$\alpha^{2}$.

Several directions remain open. The most natural is to treat $H_1,\dots,H_m$ as unknown
and estimate them jointly with $\theta$ and $\alpha^{2}$, which would also resolve the
confounding just described; since $\GH$ depends nonlinearly on $H$, both the numerical
optimisation and the asymptotic analysis become substantially harder, and even
identifiability of the individual $H_r$ from a single trajectory requires care. A second
direction is to relax the common-scale restriction and allow component-specific scales
$\alpha_1,\dots,\alpha_m$. Here Remark \ref{re:glm} indicates what is at stake: the
covariance matrix $\sum_r\alpha_r^{2}\Gamma_{H_r}$ is then no longer a scalar multiple of
a known matrix, the profile likelihood in the scale parameters no longer admits a closed
form, and one should expect the individual $\alpha_r$ to be weakly identified when the
corresponding Hurst parameters are close, since $\Gamma_{H}$ depends continuously on $H$.
Quantifying that loss of information, for instance through the Fisher information matrix
of $(\alpha_1^{2},\dots,\alpha_m^{2})$, is a natural next step. A third direction is to
relax the equidistant sampling assumption: the likelihood framework of
Theorem \ref{th:loglik} extends verbatim by modifying $\GH$, but
Proposition \ref{pr:increment} and the explicit bounds of Proposition \ref{pr:varbound}
rely on the regular grid.


\begin{thebibliography}{99}

\bibitem{Ade74}
R.~K. Adenstedt,
\emph{On large-sample estimation for the mean of a stationary random sequence},
Ann.\ Statist.\ \textbf{2} (1974), no.~6, 1095--1107.

\bibitem{AM21}
S.~Alajmi and E.~Mliki,
\emph{Mixed generalized fractional Brownian motion},
J.\ Stoch.\ Anal.\ \textbf{2} (2021), no.~2, Article~2.

\bibitem{AG07}
S.~Asmussen and P.~W. Glynn,
\emph{Stochastic Simulation: Algorithms and Analysis},
Stochastic Modelling and Applied Probability, vol.~57, Springer, New York, 2007.

\bibitem{HN25}
M.~R. Haddadi and H.~Nasrollahi,
\emph{Option pricing under non-normal distribution in mixed of Gram--Charlier model and
fractional models},
J.\ Math.\ Model.\ Finance \textbf{5} (2025), no.~1, 47--62.

\bibitem{Hig01}
D.~J. Higham,
\emph{An algorithmic introduction to numerical simulation of stochastic differential
equations},
SIAM Rev.\ \textbf{43} (2001), no.~3, 525--546.

\bibitem{HNXZ11}
Y.~Hu, D.~Nualart, W.~Xiao, and W.~Zhang,
\emph{Exact maximum likelihood estimator for drift fractional Brownian motion at discrete
observation},
Acta Math.\ Sci.\ Ser.\ B \textbf{31} (2011), no.~5, 1851--1859.

\bibitem{LC98}
E.~L. Lehmann and G.~Casella,
\emph{Theory of Point Estimation}, 2nd ed.,
Springer Texts in Statistics, Springer, New York, 1998.

\bibitem{LR05}
E.~L. Lehmann and J.~P. Romano,
\emph{Testing Statistical Hypotheses}, 3rd ed.,
Springer Texts in Statistics, Springer, New York, 2005.

\bibitem{KL15}
N.~Kuang and B.~Liu,
\emph{Parameter estimations for the sub-fractional Brownian motion with drift at discrete
observation},
Braz.\ J.\ Probab.\ Stat.\ \textbf{29} (2015), no.~4, 778--789.

\bibitem{MS23}
H.~Maleki Almani and T.~Sottinen,
\emph{Multi-mixed fractional Brownian motions and Ornstein--Uhlenbeck processes},
Mod.\ Stoch.\ Theory Appl.\ \textbf{10} (2023), no.~4, 343--366.

\bibitem{MVN68}
B.~B. Mandelbrot and J.~W. Van Ness,
\emph{Fractional Brownian motions, fractional noises and applications},
SIAM Rev.\ \textbf{10} (1968), no.~4, 422--437.

\bibitem{Mis08}
Y.~Mishura,
\emph{Stochastic Calculus for Fractional Brownian Motion and Related Processes},
Lecture Notes in Mathematics, vol.~1929, Springer, Berlin, 2008.

\bibitem{MRS17}
Y.~Mishura, K.~Ralchenko, and S.~Shklyar,
\emph{Maximum likelihood drift estimation for Gaussian process with stationary
increments},
Austrian J.\ Stat.\ \textbf{46} (2017), no.~3--4, 67--78.

\bibitem{MV15}
Y.~Mishura and I.~Voronov,
\emph{Construction of maximum likelihood estimator in the mixed fractional--fractional
Brownian motion model with double long-range dependence},
Mod.\ Stoch.\ Theory Appl.\ \textbf{2} (2015), no.~2, 147--164.

\bibitem{NP12}
I.~Nourdin and G.~Peccati,
\emph{Normal Approximations with Malliavin Calculus: From Stein's Method to Universality},
Cambridge Tracts in Mathematics, vol.~192, Cambridge University Press, Cambridge, 2012.

\bibitem{NOL08}
D.~Nualart and S.~Ortiz-Latorre,
\emph{Central limit theorems for multiple stochastic integrals and Malliavin calculus},
Stochastic Process.\ Appl.\ \textbf{118} (2008), no.~4, 614--628.

\bibitem{RY23}
K.~Ralchenko and M.~Yakovliev,
\emph{Asymptotic normality of parameter estimators for mixed fractional Brownian motion
with trend},
Austrian J.\ Stat.\ \textbf{52} (2023), 127--148.

\bibitem{RY24}
K.~Ralchenko and M.~Yakovliev,
\emph{Parameter estimation for fractional mixed fractional Brownian motion based on
discrete observations},
Mod.\ Stoch.\ Theory Appl.\ \textbf{11} (2024), no.~1, 1--29.

\bibitem{XZZ11}
W.~Xiao, W.~Zhang, and X.~Zhang,
\emph{Maximum-likelihood estimators in the mixed fractional Brownian motion},
Statistics \textbf{45} (2011), no.~1, 73--85.

\end{thebibliography}
\end{document}